\def\R{{\mathbb R}}
\def\N{{\mathbb N}}
\def\virgp{\raise 2pt\hbox{,}}
\def\({\left(}
\def\){\right)}
\def\<{\left\langle}
\def\>{\right\rangle}
\def\le{\leqslant}
\def\ge{\geqslant}
\def\d{{\partial}}
\def\eps{\varepsilon}
\renewcommand{\d}{\partial}
\newcommand{\modu}[1]{\left|#1\right|}
\newcommand{\pare}[1]{\left(#1\right)}
\theoremstyle{plain}
\newtheorem{theorem}{Theorem}[section]
\newtheorem{lemma}[theorem]{Lemma}
\newtheorem{corollary}[theorem]{Corollary}
\theoremstyle{definition}
\newtheorem{remark}[theorem]{Remark}
\newtheorem*{remark*}{Remark}
\numberwithin{equation}{section}
\begin{document}

\title[Dispersion managed NLS]
{Well-Posedness and averaging of NLS with time-periodic dispersion management}
\author[P. Antonelli]{Paolo Antonelli}
\address[P. Antonelli]{Department of Applied Mathematics and
Theoretical Physics\\
CMS, Wilberforce Road\\ Cambridge CB3 0WA\\ England.}
\email{p.antonelli@damtp.cam.ac.uk}
\author[J.-C. Saut]{Jean-Claude Saut}
\address[J.-C. Saut]{Laboratoire de Math\'ematiques, Universit\'e Pari-Sud, Batiment 425, 91405 Orsay Cedex, France.}
\email{jean-claude.saut@math.u-psud.fr}
\author[C.~Sparber]{Christof Sparber}
\address[C.~Sparber]
{Department of Mathematics, Statistics, and Computer Science, M/C 249, University of Illinois at Chicago, 851 S. Morgan Street, Chicago, IL 60607, USA.}
\email{sparber@math.uic.edu}

\begin{abstract}
We consider the Cauchy problem for dispersion managed nonlinear Schr\"odinger equations, where the dispersion map is assumed to be periodic and piecewise constant in time. We 
establish local and global well-posedness results and the possibility of finite time blow-up. In addition, we shall study the scaling limit of fast dispersion management and establish 
convergence to an effective model with averaged dispersion.
\end{abstract}

\date{\today}

\subjclass[2000]{35Q55, 35A01, 35B40}
\keywords{dispersion management, nonlinear Schr\"odinger equation, finite time blow-up, averaging}

\thanks{P. Antonelli is supported by Award No. KUK-I1-007-43, funded by the King Abdullah University of Science and Technology (KAUST). C.~Sparber 
acknowledges support of the National Science Foundation through grant no.\ DMS-1161580}
\maketitle

\section{Introduction}\label{sect:intro}

In this work, we study the Cauchy problem for the following class of {\it dispersion managed nonlinear Schr\"odinger equations} (NLS):
\begin{equation}\label{eq:disp_nls}
i\d_tu+\gamma(t)\Delta u+|u|^{p-1}u=0\qquad u(t_0,x)=\varphi(x).
\end{equation}
for $(t,x) \in \R_+\times \R^d$, a given $t_0 \in \R_+$, and $p> 1$ to be specified below. 
In addition, we assume that the {\it dispersion map} $\gamma$ is $1-${\it periodic}, i.e. $\gamma(t+1)=\gamma(t)$, and piecewise constant:
\begin{equation}\label{eq:gamma}
\gamma(t)=
\left\{\begin{array}{l}
 \gamma_+ \ \quad  \ 0< t \le t_+ \, ,\\
-\gamma_- \quad  t_+ < t \le 1 \, ,
\end{array}\right.
\end{equation}
where $\gamma_\pm >0$ are some positive constants and $t_+ \in (0,1)$. 

Our main motivation stems from models in nonlinear fiber optics. Indeed, the case of a cubic NLS, i.e. $p=3$, in $d=1$ spatial dimension 
naturally arises as an envelope equation for electromagnetic wave propagation in optical fibers exhibiting a (weak) Kerr nonlinearity. 
In this context, the variable $t\in \R_+$ actually corresponds to the distance along the fiber, and $x\in \R$ denotes
the (retarded) time. The coefficient $\gamma(t)$ consequently models a periodically varying dispersion along the fiber, cf. \cite{Ag, SuSu}. 
Dispersion managed NLS in $d=2$ spatial dimensions are also physically relevant, see, e.g., \cite{ABS}. 
The technique of dispersion management was invented to balance the effects of nonlinearity 
and dispersion in such a way that stable nonlinear pulses (solitary waves) are supported over long distances, cf. \cite{Ku, Lu1, Lu2, ZGJT}. Due to the enormous practical implications, 
there is a huge literature concerned with the qualitative properties of \eqref{eq:disp_nls}. Most often, however, the results are based on non-rigorous asymptotics and/or numerical simulations. 
A notable exception is the regime of so-called strong dispersion management where several rigorous results are available for the corresponding asymptotic model, see the discussion below.

In contrast to that, we shall work directly on the dispersion managed NLS and in the following prove several rigorous results 
concerning the well-posedness of \eqref{eq:disp_nls} and its asymptotic behavior in the case of rapidly varying dispersion. Having in mind 
the physics background of fiber optics, we shall focus on the well-posedness theory for (large) data $\varphi\in L^2(\R^d)$ or $\varphi \in H^1(\R^d)$. 
By multiplying \eqref{eq:disp_nls} with $\overline u$, integrating w.r.t. $x\in \R^d$, and taking the imaginary part of the resulting identity, we (formally) obtain {\it mass conservation}:
\[
\| u(t,\cdot) \|_{L^2(\R^d)} = \| \varphi \|_{L^2(\R^d)}.
\]
Note that in the case {\it without} dispersion management, i.e. $\gamma(t) = \gamma \in \R$, equation \eqref{eq:disp_nls} also conserves the {\it energy}
\begin{equation}\label{eq:energy}
E(t)= \frac{\gamma}{2} \int_{\R^d} | \nabla u(t,x) |^2 dx -  \frac{1}{p+1} \int_{\R^d}  | u(t,x)|^{p+1} dx = E(0).
\end{equation}
Thus, in the {\it defocusing case} $\gamma<0$ and when $p<\frac{d+2}{d-2}$, one immediately infers a uniform bound on the $H^1$ norm of $u$, prohibiting the appearance of {\it finite-time blow-up}. 
In view of \eqref{eq:gamma}, we can think of \eqref{eq:disp_nls} as switching in between focusing and defocusing behavior (with respective dispersion coefficient $\gamma_+>0$ and $-\gamma_-<0$) and we 
consequently expect the appearance of finite time blow-up. That this is indeed the case will be proved in Section \ref{sec:blow}. 
\begin{remark}
Note that this switching between focusing and defocusing behavior makes our problem very different from the NLS type models studied in \cite{Fa, La}. In \cite{Fa} the author considers an equation 
of the form \eqref{eq:disp_nls} with time-dependent dispersion $\gamma(t)\ge 0$, possibly vanishing (with finite order) on a discrete set of points, a typical example being 
$\gamma(t) = |t - t_0|^\lambda$ for some $\lambda >0$. In \cite{La} an NLS type model with time-dependent coefficients but with an additional constant coefficient third order 
spatial derivative is studied. The behavior of this model is similar to the KdV equation.
\end{remark}
Even if finite time blow-up in general can not be ruled out, one may still wonder if sufficiently {\it fast} switching between the focus and defocusing step can at least delay, the appearance of blow-up. In order 
to gain more insight we shall therefore study the scaling limit corresponding to {\it fast dispersion management}, i.e. we shall consider 
\begin{equation}\label{eq:fdisp_nls}
i\d_tu_\eps+\gamma\left(\frac{t}{\eps}\right)\Delta u_\eps+|u_\eps|^{p-1}u_\eps=0, \qquad u_\eps(t_0,x)=\varphi(x).
\end{equation}
where $0<\eps \ll 1$ denotes a small parameter. This regime has been studied using formal asymptotics and numerical simulations in, e.g., \cite{BK1, BK2, YKT}. 
As $\eps \to 0_+$ we expect the behavior of the solution $u^\eps$ to be close (in some sense to be made precise, see Section \ref{sec:fast}) 
to the solution of the {\it averaged NLS}
\begin{equation}\label{eq:av_nls}
i\d_t u_0+\langle \gamma \rangle \Delta u_0+|u_0|^{p-1}u_0=0, \qquad u_0(t_0,x)=\varphi(x),
\end{equation}
where we denote by
\begin{equation}\label{eq:average}
\langle \gamma \rangle: = \int_0^1 \gamma(\tau) d\tau,
\end{equation}
the average dispersion coefficient (which can be either positive or negative). In the case of mean zero dispersion $\langle \gamma \rangle = 0$ 
this scaling limit provides a possible explanation for the stabilizing effects of dispersion management, see Corollary \ref{cor:zero}. 

The situation above should, however, be distinguished from the case of {\it strong dispersion management}, cf. \cite{Lu1, GT} for some physical motivation. One thereby considers  
a dispersion map of the form $\frac{1}{\eps} \gamma\left(\frac{t}{\eps}\right)$, which, as $\eps \to 0_+$, leads to an effective description by a non-local equation, originally introduced in \cite{GT}. 
This model has been rigorously studied by several authors, see, e.g., \cite{EHL, HL, PZ, St, ZGJT}. In particular, it provides a mathematical basis for the definition of {\it dispersion managed solitons} \cite{EHL}. 

\begin{remark} 
A similar situation is analyzed in \cite{BD, DT}, where the authors consider NLS with fast {\it random} dispersion management. More precisely, they consider 
dispersion maps of the form $\frac{1}{\eps} \gamma \left(\frac{t}{\eps^2}\right)$, where $\gamma$ is is a (smooth) stationary random process, and prove convergence of the equation towards 
an NLS with white noise dispersion. Finally, we want to mention that in \cite{CS, DG} the somewhat dual problem of 
NLS with (rapidly) {\it time-oscillating nonlinearity} has been analyzed (see also \cite{CPS,  PS} for a related studies on the 
KdV equation).
\end{remark}

This paper is organized as follows: In Section \ref{sec:lin} we shall collect some basic properties of the linear equation to be used in the following. Section \ref{sec:well} 
establishes global well-posedness in the $L^2$ sub-critical regime. In addition, we set up a local well-posedness result in $H^1$ theory and prove the existence of 
finite time blow-up. Section \ref{sec:fast} is concerned with the fast dispersion limit. Finally, we shall collect some concluding remarks on possible generalizations 
and closely related problems in Section \ref{sec:remarks}.


\section{Basic properties of the linear equation}\label{sec:lin}

\subsection{The linear propagator} Before studying the nonlinear Cauchy problem we shall collect some basic facts about the associated linear equation
\begin{equation}\label{eq:disp_lin}
i\d_t u_{\rm lin}+\gamma(t)\Delta u_{\rm lin}=0\qquad u_{\rm lin}(t_0,x)=\varphi(x).
\end{equation} 
In the following, we shall denote by 
\[
\Gamma(t,s):= \int_{s}^t \gamma(\tau) d \tau
\]
the {\it cumulative dispersion} on the time-interval $[s,t]\subset \R$ and the associated propagator $U(t,s)$ by
\begin{equation}\label{eq:U}
U(t,s) f(x):= e^{ i \Gamma(t, s) \Delta} f(x) = \int_{\R^d}  e^{- i \Gamma(t, s) |\xi |^2} \widehat f(\xi) e^{ i x\cdot \xi} d\xi,
\end{equation}
where $\widehat f $ denotes the Fourier transform of $f\in L^2(\R^d)$. We directly infer the following result. 

\begin{lemma} \label{lem:group}
Let $t_0\in \R$ be fixed. Then the mapping $t\mapsto U(t,t_0)$ defines a family of strongly continuous unitary operators on $L^2(\R^d)$, such that for all $t\in \R_+$ it holds
\[
u_{\rm lin}(t,x) = U(t,t_0)\varphi(x).
\]
Explicitly, we find
\begin{equation}\label{eq:explicit}
u_{\rm lin}(t,x) = \frac{e^{-i \pi d /4}}{|4\pi  \Gamma(t,t_0)|^{d/2}} \int_{\R^d} e^{i |x -y|^2/ (4 \Gamma(t,t_0))} \varphi(y) dy,
\end{equation}
provided $\Gamma(t,t_0)\not =0$.
\end{lemma}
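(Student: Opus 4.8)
The plan is to recognize that, for a fixed $t_0$, the operator $U(t,s)=e^{i\Gamma(t,s)\Delta}$ is nothing but the free Schr\"odinger propagator evaluated at the \emph{reparametrized time} $\sigma=\Gamma(t,s)\in\R$; once this is understood, all three assertions reduce to standard properties of $e^{i\sigma\Delta}$ together with elementary facts about the cumulative dispersion $\Gamma$. I would first establish unitarity directly on the Fourier side. Since the symbol $\xi\mapsto e^{-i\Gamma(t,s)|\xi|^2}$ is unimodular for real $\Gamma$, Plancherel's theorem yields $\norm{U(t,s)f}_{L^2}=\norm{f}_{L^2}$, so $U(t,s)$ is an isometry, and the Fourier multiplier with complex-conjugate symbol $e^{+i\Gamma(t,s)|\xi|^2}=e^{-i\Gamma(s,t)|\xi|^2}$ provides a two-sided inverse, whence $U(t,s)$ is unitary. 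The additivity $\Gamma(t,s)+\Gamma(s,r)=\Gamma(t,r)$ moreover gives the evolution relation $U(t,s)U(s,r)=U(t,r)$ with $U(t_0,t_0)=\mathrm{Id}$.

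Next I would treat strong continuity and verify that $u_{\rm lin}=U(\cdot,t_0)\varphi$ solves \eqref{eq:disp_lin}. Because $\gamma$ is bounded, the map $t\mapsto\Gamma(t,t_0)=\int_{t_0}^t\gamma(\tau)\,d\tau$ is Lipschitz, hence continuous. Fixing $f\in L^2(\R^d)$, Plancherel gives, for some fixed $c_d>0$,
\[
\norm{U(t,t_0)f-U(t',t_0)f}_{L^2}^2=c_d\int_{\R^d}\modu{e^{-i\Gamma(t,t_0)|\xi|^2}-e^{-i\Gamma(t',t_0)|\xi|^2}}^2\modu{\widehat f(\xi)}^2\,d\xi,
\]
whose integrand is dominated by $4\modu{\widehat f(\xi)}^2\in L^1(\R^d)$ and tends to $0$ pointwise as $t\to t'$ by continuity of $\Gamma(\cdot,t_0)$; dominated convergence then yields strong continuity. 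To verify the equation I would differentiate on the Fourier side: since $\partial_t\Gamma(t,t_0)=\gamma(t)$ for a.e.\ $t$ (all $t$ away from the discrete set of switching times), one gets $\partial_t\widehat{u}_{\rm lin}(t,\xi)=-i\gamma(t)|\xi|^2\widehat{u}_{\rm lin}(t,\xi)$, and combining this with $\widehat{\Delta u}_{\rm lin}=-|\xi|^2\widehat{u}_{\rm lin}$ gives exactly $i\partial_t\widehat u_{\rm lin}+\gamma(t)\widehat{\Delta u}_{\rm lin}=0$; the initial condition follows from $\Gamma(t_0,t_0)=0$.

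Finally, the explicit formula \eqref{eq:explicit} follows by inserting $\sigma=\Gamma(t,t_0)$ into the classical integral kernel of the free Schr\"odinger group, $e^{i\sigma\Delta}\varphi(x)=(4\pi i\sigma)^{-d/2}\int_{\R^d}e^{i|x-y|^2/(4\sigma)}\varphi(y)\,dy$, and simplifying the prefactor with the principal branch of the square root. I expect this last step to be the main (if routine) obstacle, because the symbol $e^{-i\sigma|\xi|^2}$ is merely bounded and not integrable, so the kernel does not exist as an $L^1$ function and the oscillatory integral must be read in a limiting sense. The clean route is to regularize, replacing $i\sigma$ by $z=\delta+i\sigma$ with $\delta>0$: the genuinely Gaussian multiplier $e^{-z|\xi|^2}$ then yields the absolutely convergent kernel $(4\pi z)^{-d/2}e^{-|x-y|^2/(4z)}$ by a standard Gaussian computation, after which I let $\delta\to0^+$ for $\varphi$ in the Schwartz class and extend to $L^2(\R^d)$ by density using the isometry proved above. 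Tracking the branch of $(4\pi z)^{d/2}$ as $z$ tends to the imaginary axis produces the modulus $\modu{4\pi\Gamma(t,t_0)}^{d/2}$ together with the phase $e^{-i\pi d/4}$ when $\Gamma(t,t_0)>0$ (and its complex conjugate when $\Gamma(t,t_0)<0$), which is the content of \eqref{eq:explicit}.
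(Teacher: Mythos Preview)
Your proof is correct and complete. The paper itself does not supply a proof of this lemma: it is stated with the preamble ``We directly infer the following result'' and followed only by remarks, so the authors treat it as an immediate consequence of the definition \eqref{eq:U} and the standard properties of the free Schr\"odinger group $e^{i\sigma\Delta}$ at the reparametrized time $\sigma=\Gamma(t,t_0)$. Your argument spells out precisely this reduction---unitarity and strong continuity via Plancherel and dominated convergence, and the kernel via the classical Gaussian regularization---which is exactly the content the paper is taking for granted. Your observation that the phase prefactor $e^{-i\pi d/4}$ in \eqref{eq:explicit} should in fact be $e^{-i\pi d\operatorname{sgn}(\Gamma)/4}$ is also correct; the paper's formula as written is only accurate for $\Gamma(t,t_0)>0$.
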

Some remarks are in order: First, one should note that for all $s,t\in \R$: \[U(t,t_0)U(t_0,s)=U(t,s),\]
and we also have \[U(t,s)^*= U(t,s)^{-1} = U(s,t).\] However, $U(t,s)\not = U(t-s,0)$, since 
\[
\int_{s}^t \gamma(\tau) d \tau = \int_{0}^{t-s} \gamma(\tau+s) d \tau \not =  \int_{0}^{t-s} \gamma(\tau) d \tau,
\]
unless $\gamma(t)=\text{const.}$ {\it for all} $t\in \R$. In other words, $U(t,s)$ is not a group. Second, we note that for $\varphi \in L^2(\R^d)$, it holds
\[
\partial_t u \in BV(\R;  H^{-2}(\R^d)),
\]
in view of \eqref{eq:disp_nls} and the fact that $\gamma(t)$ is piecewise constant (including jump discontinuities). 
In particular, $\partial_t u$ is {\it not} continuous in time, in contrast to the case with constant dispersion.

Next, we recall the decomposition
\[
\gamma(t) = \langle \gamma \rangle + \gamma_0(t),
\]
where $\langle \gamma \rangle \in \R$ denotes the average defined in \eqref{eq:average}, and $\gamma_0(t)$ is $1-$periodic with mean zero. Using this, we can write
\begin{equation}\label{eq:Gamma_decom}
\Gamma(t,s) = \langle \gamma \rangle (t-s) +  \int_{s}^t \gamma_0(\tau) d \tau.
\end{equation}
We consequently infer that if $\langle \gamma \rangle = 0$, then $\Gamma(t,t_0)=0$ for all $t \in \R$ such that $(t-t_0)\in \N$. On the other hand, if $\langle \gamma \rangle \not = 0$, then $\Gamma(t,t_0)$ 
vanishes at most finitely many times, since 
\begin{equation} \label{eq:gamma_bound}
-(\gamma_- + \langle \gamma \rangle) \le \int_{t_0}^t \gamma_0(\tau) d \tau \le \gamma_+ - \langle \gamma \rangle,
\end{equation}
and thus $|\Gamma(t,t_0)| \gtrsim |t|$, provided $t\ge t_0$ is sufficiently large. Also note that if $t_0 \in \N$, i.e. if we start our time-evolution at the beginning of focusing step, then 
\[
\vartheta (t): = \int_{t_0}^t \gamma_0(\tau) d \tau \ge 0,
\]
for all times $t \in \R$, whereas if we start at $t=t_0$ at the beginning of a defocusing step, then $\vartheta (t) \le 0$ for all times.
\begin{remark}
In the particular situation of mean-zero dispersion, i.e. $\langle \gamma \rangle=0$, the solution $u_{\rm lin}$ is found to be $1-$periodic in time. Indeed, in view of \eqref{eq:U}, we infer that the Fourier transformed 
solution is given by
\[
\widehat u_{\rm lin}(t, \xi)=\exp\left(i|\xi|^2 \int_{t_0}^t \gamma_0(\tau) d \tau \right) \widehat \varphi(\xi),
\]
which satisfies $\widehat u_{\rm lin}(n, x)=\widehat \varphi(x)$, for all $n \in\N$. 
\end{remark}

\subsection{Dispersive properties} From what is said above it is clear that $U(t,s)$ in general does {\it not} allow to infer uniform (in time) dispersive estimates (analogous to the usual Schr\"odinger group), since for 
arbitrary $s,t \in \R$
\[
|\Gamma(t,s) |= \Big | \int_{s}^t \gamma(\tau) d \tau \Big | \not  \gtrsim | t-s|,
\]
due to fact that $\gamma(t)$ changes sign. 
\begin{remark}
Clearly, this would not be the case if $|\gamma(t)|>0$ and $1-$periodic, in which case the behavior of \eqref{eq:disp_nls}, for all times $t\in \R$, would be either focusing or defocusing 
(but with time-dependent dispersion coefficient), see, e.g., \cite{Fa}.
\end{remark}

The fact that $\gamma(t)$ is assumed to be piecewise constant allows us to obtain the following result.
\begin{lemma} Let $t_+ \in (0,1)$ be given and $t, s \in  (n , n + t_+]$, or $t,s \in (n+ t_+ , n+1]$, for $n\in \N$. Then, for $t\not = s$, it holds
\[ \|U(t,s) f \|_{L^\infty(\R^d)}\lesssim |t-s|^{- d/2} \|f\|_{L^1(\R^d)}.\]
\end{lemma}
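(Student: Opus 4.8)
The plan is to reduce everything to the explicit kernel already recorded in Lemma \ref{lem:group}, exploiting the crucial fact that the hypothesis forces $\gamma$ to be \emph{genuinely constant} on the whole interval lying between $s$ and $t$. Indeed, if both $t,s \in (n,n+t_+]$, then $\gamma(\tau)=\gamma_+$ for every $\tau$ between them, so that $\Gamma(t,s)=\int_s^t \gamma_+\,d\tau = \gamma_+(t-s)$; symmetrically, if $t,s\in(n+t_+,n+1]$, then $\Gamma(t,s)=-\gamma_-(t-s)$. In either case $|\Gamma(t,s)|=\gamma_\pm|t-s|$, which is strictly positive for $t\neq s$ since $\gamma_\pm>0$. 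This is precisely the condition $\Gamma(t,s)\neq 0$ under which the oscillatory-integral representation \eqref{eq:explicit} of the propagator is valid.

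First I would write $U(t,s)$ in kernel form: exactly as in \eqref{eq:explicit}, with $\Gamma(t,t_0)$ replaced by $\Gamma(t,s)$ (this is legitimate since $U(t,s)=e^{i\Gamma(t,s)\Delta}$ is just the free propagator evaluated at the ``time'' $\Gamma(t,s)$), one has
\[
U(t,s)f(x)=\frac{c_d}{|4\pi\Gamma(t,s)|^{d/2}}\int_{\R^d} e^{i|x-y|^2/(4\Gamma(t,s))}f(y)\,dy,
\]
where $c_d$ is a unimodular constant whose phase depends only on $d$ and on $\sgn \Gamma(t,s)$, i.e. on whether we sit in a focusing or a defocusing step. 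Since the Gaussian factor has modulus one, the modulus of the kernel is bounded by $|4\pi\Gamma(t,s)|^{-d/2}$ uniformly in $x,y$. Pulling this bound out of the integral and integrating the remaining $|f|$, I obtain the standard $L^1\to L^\infty$ dispersive estimate $\|U(t,s)f\|_{L^\infty(\R^d)}\le |4\pi\Gamma(t,s)|^{-d/2}\|f\|_{L^1(\R^d)}$.

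Finally, I would convert this into the stated form using $|\Gamma(t,s)|=\gamma_\pm|t-s|\gtrsim|t-s|$: since $\gamma_\pm>0$ are fixed, $|4\pi\Gamma(t,s)|^{-d/2}=(4\pi\gamma_\pm)^{-d/2}|t-s|^{-d/2}\lesssim|t-s|^{-d/2}$, with implicit constant depending only on $d$ and on $\min(\gamma_+,\gamma_-)$. This yields the claim. There is no genuine obstacle here; the only point requiring care is that the argument collapses the moment $s$ and $t$ straddle a switching time $n+t_+$ or $n+1$, because then $\Gamma(t,s)$ becomes a difference of the two (opposite-sign) contributions and may vanish even for $t\neq s$. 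This is exactly why the hypothesis confines $t,s$ to a single step, and it is the same mechanism underlying the earlier observation that no time-uniform dispersive estimate can hold for $U(t,s)$.
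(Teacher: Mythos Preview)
Your proof is correct and follows exactly the same approach as the paper's own proof: use the explicit kernel representation \eqref{eq:explicit}, observe that on each constant-dispersion subinterval $\Gamma(t,s)=\pm\gamma_\pm(t-s)$, and read off the $L^1\to L^\infty$ bound. The paper states this in two lines, while you have spelled out the details and added the helpful remark on why the hypothesis is sharp, but the argument is identical.
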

The point is that both $t$ and $s$ have to be in the {\it same} time-interval corresponding to either a focusing or defocusing step.
\begin{proof}
The proof follows directly from the representation formula \eqref{eq:explicit} and the fact that if $t,s \in  (n , n + t_+] $, or $t,s \in (n+ t_+ , n+1]$ we have
\[
\Gamma(t,s) =  \int_{s}^t \gamma(\tau) d \tau = \pm \gamma_\pm (t-s),
\]
in view of \eqref{eq:gamma}.
\end{proof}

Clearly, we have that
\begin{equation}\label{eq:decom}
\bigcup_{n\in \mathbb Z} \big ( (n , n + t_+] \cup (n+ t_+ , n+1] \big )=\R, 
\end{equation}
and thus we can split the time-axis $\R$ into a union of sub-intervals on which $U(t,s)$ allows for the usual dispersive behavior of the Schr\"odinger group. 
As a consequence, we know that the usual Strichartz estimates hold on each such time-interval \cite{GV, KT}. To this end, let us recall that a pair $(q,r)$ is {\it admissible} if $2\le r
  \le\frac{2d}{d-2}$ (resp. $2\le r\le \infty$ if $d=1$, $2\le r<
  \infty$ if $d=2$)  
  and \cite{Caz}:
$$\frac{2}{q}= d\left( \frac{1}{2}-\frac{1}{r}\right).$$

\begin{lemma} \label{lem:strichartz}
Let $n\in \N$ and $t_+ \in (0,1)$ be given. Then, for each admissible $(q,r)$, $(q_1,r_1)$, $(q_2,r_2)$, and each 
time-interval $I_n\subset (n , n + t_+]$, or $I_n \subset (n+ t_+ , n+1]$, we have:\\
$1.$ There exists $C_r=C(r,I_n)$, such that for $t_0\in I_n$ and for any $f \in
L^2(\R^d)$ it holds 
\[
    \left\| U(\cdot, t_0)f \right\|_{L^q(I_n;L^r(\R^d))}\le C_r \|f
    \|_{L^2(\R^d)}.
\]
$2.$ There exists $C_{r_1,r_2}=C(r_1,r_2,I)$, such that for any $F\in
L^{q'_2}(I_n;L^{r'_2})$ it holds
\[
      \Big\| \int_{I_n\cap\{s\le
      t\}} U(t,s)F(s) d s 
      \Big\|_{L^{q_1}(I_n;L^{r_1}(\R^d))}\le C_{r_1,r_2} \left\|
      F\right\|_{L^{q'_2}(I_n;L^{r'_2}(\R^d))} .
\]
\end{lemma}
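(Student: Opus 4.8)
The plan is to reduce both estimates to the classical Strichartz inequalities for the free Schrödinger group $e^{i\tau\Delta}$ on $\R^d$, exploiting the fact already recorded above that on a single interval $I_n$ lying entirely within one focusing or defocusing step the cumulative dispersion is linear, $\Gamma(t,s)=\pm\gamma_{\pm}(t-s)$. I would give the abstract argument as the main route, since it delivers both parts at once and handles the off-diagonal case in part 2. Concretely, I would check that the two-parameter family $U(t,t_0)=e^{i\Gamma(t,t_0)\Delta}$ satisfies the hypotheses of the abstract Keel--Tao theorem \cite{KT}: (i) the uniform energy bound $\|U(t,t_0)f\|_{L^2}=\|f\|_{L^2}$, which is just the unitarity from Lemma \ref{lem:group}, and (ii) the untruncated decay estimate $\|U(t,t_0)U(s,t_0)^{*}g\|_{L^\infty}\lesssim|t-s|^{-d/2}\|g\|_{L^1}$ for $t,s\in I_n$.

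For (ii) I would first invoke the composition and adjoint identities stated after Lemma \ref{lem:group}, namely $U(t,t_0)U(s,t_0)^{*}=U(t,t_0)U(t_0,s)=U(t,s)$; the right-hand side is then controlled by exactly the dispersive estimate of the preceding lemma, which applies precisely because both $t$ and $s$ are assumed to lie in the same interval $I_n$. With (i) and (ii) in place, the Keel--Tao machinery yields simultaneously the homogeneous estimate (part 1) and the full inhomogeneous estimate (part 2) for all admissible exponents, including the off-diagonal pairs $(r_1,r_2)$. Since the relevant decay exponent is $\sigma=d/2$, the admissibility relation $\tfrac{2}{q}=d(\tfrac12-\tfrac1r)$ is exactly the one quoted before the statement, so no endpoint difficulties arise beyond those already settled in \cite{KT}.

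More transparently, I would also note the direct change-of-variables argument, which makes the $I_n$-dependence of the constants explicit: on a focusing step $U(\cdot,t_0)f=e^{i\gamma_{+}(t-t_0)\Delta}f$, so substituting $\tau=\gamma_{+}(t-t_0)$ in the $L^q_t$ norm and applying the standard Strichartz estimate for $e^{i\tau\Delta}$ gives part 1 at once, with the Jacobian factor $\gamma_{+}^{-1/q}$ absorbed into $C_r$; the defocusing step is identical with $\gamma_{-}$ and a sign. The same double substitution in $t$ and $s$ converts the retarded integral of part 2 into the usual inhomogeneous Strichartz estimate. This explains the dependence of the constants on the step through $\gamma_{\pm}$ and thereby justifies the notation $C_r=C(r,I_n)$ and $C_{r_1,r_2}=C(r_1,r_2,I_n)$.

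The one genuinely essential point — and the reason the lemma is phrased in terms of sub-intervals rather than a general time-interval — is the hypothesis that $t$ and $s$ lie in the \emph{same} step. If they straddle a boundary $n+t_+$, with, say, $s\in(n,n+t_+]$ and $t\in(n+t_+,n+1]$, then $\Gamma(t,s)=\gamma_{+}(n+t_+-s)-\gamma_{-}(t-n-t_+)$ can be small, or even vanish, while $|t-s|$ stays of order one; the decay bound (ii) then fails, and with it the $TT^{*}$ argument. Thus the main obstacle is not in the analysis of a single interval, which is routine once the reparametrization is observed, but in respecting this restriction: the resulting constants are local in time, and the later globalization of the well-posedness theory must sum these estimates over consecutive intervals.
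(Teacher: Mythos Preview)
Your proposal is correct and aligns with the paper's approach: the paper does not give an explicit proof of this lemma, but simply states that the Strichartz estimates follow on each sub-interval as a consequence of the dispersive estimate of the preceding lemma, citing \cite{GV, KT}. Your argument supplies exactly the details behind that sentence --- verifying the Keel--Tao hypotheses via unitarity and the local dispersive bound, together with the alternative change-of-variables reduction --- so there is nothing to correct.
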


Again, we can only conclude the existence of Strichartz estimate locally on each time-interval corresponding to, either, a focusing or defocusing step.

\begin{remark} One may want to compare this to the case with {\it smooth} periodic dispersion management. For example, let $\gamma(t) = \cos ( t)$ and $t_0 = 0$. In this case we find
\[
u_{\rm lin}(t,x) = \frac{e^{-i \pi d /4}}{|4 \pi  \sin ( t ) |^{d/2}} \int_{\R^d} e^{i |x -y|^2/ (4 \sin ( t))} \varphi(y) dy,
\]
for which the corresponding dispersive estimate is again valid only for small times, i.e.
\[ \|U (t,0) f \|_{L^\infty(\R^d)}\lesssim |t|^{- d/2} \|f\|_{L^1(\R^d)}, \quad \text{for $|t| < \frac{\pi}{2}$}.\]
This situation is very similar to the case of a Schr\"odinger equation with quadratic confinement \cite{Ca}. The corresponding Schr\"odinger group is formally given by
\[
S(t,0) \varphi (x)= e^{ i t (\Delta +|x|^2 )} \varphi(x) , \quad t\in \R_+,
\]
the kernel of which can be explicitly computed via Mehler's formula. Due to the existence of eigenfunctions, this time-evolution enjoys dispersive properties 
only for sufficiently small $0<|t|<\delta$, see \cite{Ca} for more details.  
\end{remark}


\section{Well-posedness results} \label{sec:well}

\subsection{The $L^2$ sub-critical case}\label{sec:sub}

As we have seen, the propagator $U(t,s)$ does not allow for uniform dispersive estimates. Nevertheless, one can prove global well-posedness of \eqref{eq:disp_nls} in the $L^2$ subcritical case.

\begin{theorem} \label{thm:L2}
Consider \eqref{eq:disp_nls} with $\varphi \in L^2(\R^d)$ and $1 < p < 1 + \frac{4}{d}$.  
Then, for any $t_0 \in \R$, there exists a unique solution $u \in C(\R,L^2(\R^d))$ satisfying 
\[ 
\| u(t,\cdot) \|_{L^2(\R^d)} = \| \varphi \|_{L^2(\R^d)}.
\]
In addition, for each admissible pair $(q, r)$ and for any compact time-interval $I\subset \R$:
\begin{equation*}
\|u\|_{L^q_tL^r_x(I\times\R^d)}\le C\big(|I|, \|u_0\|_{L^2(\R^d)}\big).
\end{equation*}
\end{theorem}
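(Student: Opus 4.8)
The plan is to prove global well-posedness by the standard Strichartz-based fixed point argument, adapted to the fact that our propagator $U(t,s)$ only satisfies Strichartz estimates locally in time on each monodispersive sub-interval (Lemma \ref{lem:strichartz}), but combined with mass conservation to iterate globally.

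First I would establish \emph{local} well-posedness on a single monodispersive interval. Fix $t_0 \in \R$; without loss of generality suppose $t_0$ lies in a focusing step, so $t_0 \in (n, n+t_+]$ for some $n$. On a sub-interval $I = [t_0, t_0 + \tau]$ contained in this step (so that $\gamma$ is constant there and the full Strichartz machinery of Lemma \ref{lem:strichartz} applies), define the Duhamel map
\begin{equation*}
\Phi(u)(t) = U(t,t_0)\varphi + i \int_{t_0}^t U(t,s)\, |u(s)|^{p-1} u(s)\, ds.
\end{equation*}
Choosing the admissible exponents associated with the $L^2$-subcritical power $p$, namely $r = p+1$ and $q = \frac{4(p+1)}{d(p-1)}$, I would show $\Phi$ is a contraction on a ball of the space $C(I;L^2) \cap L^q_t L^r_x(I)$ for $\tau = |I|$ small enough. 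The estimates are the usual ones: the homogeneous Strichartz estimate controls the linear part, the inhomogeneous estimate plus Hölder in time (using that $p < 1 + \tfrac{4}{d}$ leaves a positive power of $|I|$ to spare) gives the contraction. The crucial point is that the length of the time-step on which contraction holds depends \emph{only} on $\|\varphi\|_{L^2}$, not on higher norms.

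Then I would \emph{globalize}. Because the contraction time depends only on the $L^2$-norm, and because mass is conserved, $\|u(t)\|_{L^2} = \|\varphi\|_{L^2}$ for all $t$ where the solution exists, the local existence time never shrinks and the solution extends to all of $\R$ by a standard continuation argument. The only subtlety specific to this problem is crossing the dispersion-map discontinuities at the points $n + t_+$ and $n+1$: here $\gamma$ jumps, so one cannot run a single Strichartz argument across a jump. I would handle this by iterating the local construction on the partition \eqref{eq:decom}, restarting at each breakpoint and using $U(t,s) = U(t,\sigma)U(\sigma,s)$ together with the unitarity of $U$ on $L^2$ to patch the pieces into a solution $u \in C(\R; L^2)$. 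Mass conservation itself follows from the unitarity of $U(t,s)$ and the usual energy identity obtained by pairing the equation with $\bar u$; since each piece conserves mass, so does the concatenation. Uniqueness follows from the same contraction estimate run backward. Finally, the global Strichartz bound on a compact interval $I$ is obtained by summing the (finitely many) local bounds over the sub-intervals of the partition \eqref{eq:decom} meeting $I$, which produces a constant depending only on $|I|$ and $\|\varphi\|_{L^2}$.

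The main obstacle, and the only genuinely new feature compared with the classical constant-coefficient theory, is the \textbf{absence of global-in-time Strichartz estimates}: the dispersive decay of $U(t,s)$ fails whenever $t$ and $s$ straddle a sign change of $\gamma$, so one has no single space-time norm on $\R$ in which to run a global fixed point. The resolution is conceptual rather than technical — one only ever needs Strichartz estimates on intervals short enough to lie in one step, and the $L^2$-subcriticality guarantees the existence time is controlled by the conserved mass alone, so finitely many iterations suffice to cover any compact interval and the per-step constants do not degenerate. Care is needed to verify that the admissibility constants $C_r, C_{r_1,r_2}$ from Lemma \ref{lem:strichartz} can be taken uniform across the two types of steps (focusing and defocusing), which follows since they depend only on $r$ and $|I_n| \le 1$.
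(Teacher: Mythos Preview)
Your proposal is correct and follows essentially the same approach as the paper: both decompose the time axis into monodispersive sub-intervals via \eqref{eq:decom}, run the standard Strichartz contraction argument on each piece (the paper uses the admissible pair $(q,r)=(\tfrac{4p}{d(p-1)},2p)$ rather than your $(q,r)=(\tfrac{4(p+1)}{d(p-1)},p+1)$, but this is immaterial), and globalize by mass conservation since the local existence time depends only on $\|\varphi\|_{L^2}$. The paper also spells out the $C(\R;L^2)$ continuity across the jump points via the strong continuity of $U(t,t_0)$, which you allude to when discussing the patching across breakpoints.
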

\begin{proof}
Using Duhamel's formula the solution of \eqref{eq:disp_nls} can be written as
\begin{equation}\label{eq:duhamel}
u(t,x) = U(t,t_0) \varphi(x) + i \int_{t_0}^t U(t,s) |u(s,x)|^{p-1} u(s,x) \, ds.
\end{equation}
Next, we use the decomposition \eqref{eq:decom} to split the time-interval $[t_0, t]\in \R$ into countably many sub-intervals 
\[
[t_0, t]= \bigcup_{n\in \mathbb Z} \big([t_0, t] \cap I^1_n\big) \cup \big([t_0, t] \cap I_n^2)\big),
\] 
with $I^1_n =  (n , n + t_+]$ and $I_n^2 = (n+ t_+ , n+1]$. In each of the intervals $I_n^1$, $I_n^2$ we are able to apply the Strichartz estimates stated in Lemma \ref{lem:strichartz}. 
Indeed, for $1< p<1+\frac{4}{d}$, the pair \[ (q,r) = \pare{\frac{4p}{d(p-1)}, 2p}\] is admissible and hence we have
\begin{align*}
\|u\|_{L^{\frac{4p}{d(p-1)}}_tL^{2p}_x(I\times\R^d)}\lesssim & \ \|u_0\|_{L^2}+\||u|^{p-1}u\|_{L^1_tL^2_x(I\times\R^d)}\\
\lesssim & \ \|u_0\|_{L^2}+|I|^\alpha\|u\|_{L^{\frac{4p}{d(p-1)}}_tL^{2p}_x(I\times\R^d)}^p\quad ,
\end{align*}
where $\alpha=\frac{4+d-dp}{4}>0$. Hence, if $I\subset I_n^{1,2}$ is sufficiently small, a standard continuity argument implies that there exists a solution 
$u$ to \eqref{eq:disp_nls} in $I\times\R^d$ such that
\begin{equation*}
\|u\|_{L^{\frac{4p}{d(p-1)}}_tL^{2p}_x(I\times\R^d)}\le C(\|u_0\|_{L^2(\R^d)}).
\end{equation*}
The right hand side only depends on the $L^2(\R^d)$-norm of the solution, which is uniformly bounded for all times (and in fact equal to $\| \varphi \|_{L^2}$). 
Continuity therefore implies that on each subinterval the solution exists for all times $t$ in one of the sub-interval $I_n^1$, $I_n^2$, and, in addition, we have
\[
\|u\|_{L^q_tL^r_x(I^{1,2}_n\times\R^d)}\le C(\|u_0\|_{L^2}, |I_n^{1,2}|).
\]
By considering the union of sub-intervals $I_n$ we consequently infer the existence of a solution $u\in L^\infty(\R; L^2(\R^n))$. 

In order to obtain the asserted continuity in time, we consider two different times $t_2\not = t_1\in \R_+$ for which we need to show that
\begin{equation*}
\|u(t_1, \cdot)-u(t_2, \cdot)\|_{L^2(\R^n)}\to0,\quad\textrm{as}\quad |t_1-t_2|\to0.
\end{equation*}
In view of Duhamel's formula \eqref{eq:duhamel} this requires 
\begin{equation*}
\|U(t_1,t_0) \varphi -U(t_2,t_0) \varphi \|_{L^2(\R^n)}\stackrel {t_1 \to t_2}\longrightarrow0,
\end{equation*}
which is nothing but the strong continuity of $U(t,t_0)$ stated in Lemma \ref{lem:group}. For the second term on the right hand side of \eqref{eq:duhamel} we 
write
\begin{align*}
 \int_{t_0}^{t_1} U(t_1,s) |u(s,x)|^{p-1} u(s,x) \, ds -  \int_{t_0}^{t_2} U(t_2,s) |u(s,x)|^{p-1} u(s,x) \, ds = i_1 +  i_2,
\end{align*}
where
\begin{equation*}
i_1  = \big( U(t_1,t_0) - U(t_2, t_0) \big) \int_{t_0}^{t_2} U(t_0, s) |u(s,x)|^{p-1}u(s,x) \, ds
\end{equation*}
and
\begin{equation*}
i_2  = \int_{t_2}^{t_1}U(t,s) |u(s,x)|^{p-1}u(s,x)\, ds.
\end{equation*}
By using the strong continuity of $U(t,s)$ for $i_1$, and Strichartz estimates for $i_2$ (after splitting the time-interval $[t_2, t_1]$ into unions of $I^1_n$ and $I_n^2$), 
we see that both are  $o(1)$ as $ |t_1-t_2|\to0$. Thus, the solution is continuous in time with values in $L^2(\R^d)$.
\end{proof}


\subsection{Local well-posedness in $H^1$ and finite time blow-up}\label{sec:blow}

In order to allow for $p =  1 + \frac{4}{d}$, which, in particular, includes the physically relevant situation of a 
cubic nonlinearity in $d=2$ spatial dimensions, we require $\varphi \in H^1(\R^d)$. In the following, we shall always denote by $(a)_+$ the positive part of $a\in \R$.

\begin{lemma}\label{lem:H1}
Consider \eqref{eq:disp_nls} with $\varphi \in H^1(\R^d)$ and $1<  p < 1 + \frac{4}{(d-2)_+}$. 
Then there exists a 
$T^*=T^*(\| \varphi \|_{H^1})>t_0$ and a unique 
solution $u \in C([t_0,T],H^1(\R^d))$, for all $t_0< T<T^*$. The solution is maximal in the sense that if $T^* < +\infty$ then 
\[
\lim_{t\to T^*} \| \nabla u(t, \cdot) \|_{L^2(\R^d)} = + \infty.
\]
\end{lemma}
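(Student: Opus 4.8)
The plan is to prove Lemma \ref{lem:H1} by a fixed-point argument in a suitable Strichartz space, exactly paralleling the $L^2$ theory of Theorem \ref{thm:L2} but now controlling one spatial derivative. The essential difficulty, and the reason the proof is only \emph{local} in time, is that the Strichartz estimates of Lemma \ref{lem:strichartz} are available only on each sub-interval $I_n^1 = (n, n+t_+]$ or $I_n^2 = (n+t_+, n+1]$ separately, and the contraction constant now depends on $\|\varphi\|_{H^1}$ rather than on the conserved $L^2$ norm. Hence, unlike in the mass-subcritical case, there is no conserved quantity at the $H^1$ level that bootstraps local existence into global existence, and the solution may cease to exist once $\|\nabla u\|_{L^2}$ blows up.

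First I would fix an admissible pair adapted to the nonlinearity: for $1 < p < 1 + \frac{4}{(d-2)_+}$ the exponent $r = p+1$ is admissible (it satisfies $2 \le r \le \frac{2d}{d-2}$ in the relevant range), with conjugate time-exponent $q = \frac{4(p+1)}{d(p-1)}$. Working on a short interval $I = [t_0, T] \subset I_n^{1,2}$ contained in a single dispersion step, I would define the Duhamel map
\[
\Phi(u)(t) = U(t,t_0)\varphi + i \int_{t_0}^t U(t,s)\, |u(s)|^{p-1}u(s)\, ds,
\]
and set up the contraction in the space $X = C(I; H^1) \cap L^q(I; W^{1,r})$ equipped with the natural Strichartz norm. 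The key estimates are: the homogeneous bound $\|U(\cdot,t_0)\varphi\|_{L^q_t W^{1,r}_x} \lesssim \|\varphi\|_{H^1}$ and the inhomogeneous bound from Lemma \ref{lem:strichartz}, together with the nonlinear estimate $\| |u|^{p-1}u \|_{L^{q'}_t W^{1,r'}_x} \lesssim |I|^\theta \|u\|_{X}^p$ for some $\theta > 0$, obtained via the fractional chain rule (or, for $H^1$, the ordinary product rule $\nabla(|u|^{p-1}u) \sim |u|^{p-1}\nabla u$) combined with Hölder in time and space. The gain of a positive power $|I|^\theta$ is exactly what makes $\Phi$ a contraction on a ball of $X$ once $|I|$ is small depending only on $\|\varphi\|_{H^1}$; standard Banach fixed-point then yields a unique local solution, and the embedding $W^{1,r} \hookrightarrow$ the relevant spaces upgrades $\Phi(u) \in C(I; H^1)$.

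The remaining points are the crossing of dispersion-step boundaries and the blow-up alternative. Since the endpoints $n$ and $n+t_+$ are at most a locally finite set, I would iterate the local construction step-by-step: having solved up to some time $\tau$ lying at a step boundary, I restart the fixed-point argument with initial datum $u(\tau)\in H^1$ on the next sub-interval, the strong continuity and unitarity of $U(t,s)$ from Lemma \ref{lem:group} guaranteeing that $u$ extends continuously across $\tau$. Concatenating these pieces produces a maximal existence time $T^*$. For the blow-up alternative, I would argue by contradiction: the local existence time depends only on the $H^1$ norm of the datum, so if $\liminf_{t\to T^*}\|\nabla u(t)\|_{L^2} < \infty$ along some sequence (note $\|u\|_{L^2}$ is conserved, hence the full $H^1$ norm stays bounded along that sequence), one could restart the solution from a time close to $T^*$ and extend it beyond $T^*$, contradicting maximality. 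This forces $\lim_{t\to T^*}\|\nabla u(t)\|_{L^2} = +\infty$ whenever $T^* < \infty$.

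The step I expect to be the main obstacle is keeping the fixed-point argument clean \emph{across} the discontinuities of $\gamma$: the nonlinear estimate and the contraction are each valid only inside a single step, so one must verify that the finitely-many restarts on $[t_0,T]$ do not degrade the constants, and that the $H^1$-valued continuity genuinely persists at the jump points where $\partial_t u$ is merely $BV$ in time (as noted after Lemma \ref{lem:group}). This is a bookkeeping issue rather than a conceptual one, but it is where the piecewise-constant structure of $\gamma(t)$ really has to be handled with care.
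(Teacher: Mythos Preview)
Your proposal is correct and follows essentially the same approach as the paper: differentiate the Duhamel formula, use the pointwise bound $|\nabla(|u|^{p-1}u)|\lesssim |u|^{p-1}|\nabla u|$, decompose $[t_0,t]$ into the sub-intervals $I_n^{1,2}$ on which Lemma~\ref{lem:strichartz} applies, and close a contraction via Strichartz plus Sobolev embedding, so that the local existence time depends only on $\|\varphi\|_{H^1}$. The paper's proof is a terse three-line sketch of exactly these ingredients; your additional discussion of the crossing of step boundaries and the blow-up alternative is standard bookkeeping that the paper leaves implicit.
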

 
Note that in dimensions $d=1,2$ we can allow for any exponent $1<p<+\infty$.

\begin{proof} Differentiating \eqref{eq:disp_nls} with respect to $x$ we see that $\nabla u$ solves the following integral equation
\[
\nabla u(t,x) = U(t,t_0) \nabla \varphi(x) + i \int_{t_0}^t U(t,s) G(u(s,x)) \, ds,
\]
where 
\[
G(u) = \frac{p+1}{2} | u |^{p-1} \nabla u + \frac{p-1}{2} | u|^{p-3} u^2 \nabla \overline{u}.
\]
The latter satisfies 
\[
G(u) \lesssim | u |^{p-1} | \nabla u|.
\]
By decomposing, as before $[t_0, t]$ into unions of $I_n^1$ and $I_n^2$ and applying Strichartz estimates together with Soblev's imbedding on each of this sub-intervals, yields the 
existence of a local in-time solution $u(t,\cdot) \in H^1(\R^d)$. The existence $T^*>0$ time thereby depends on $\| \varphi \|_{H^1}$.
\end{proof}

Clearly, if at the initial time $t_0\in \R$ it holds: $\gamma(t_0) = -\gamma_-$, i.e. if we start with a defocusing step, then we know that the solution exists at least up to $T^*\ge t_+$. 
This follows from the fact that $\gamma(t)$ is constant (and negative) for all $t\in [t_0, t_+]$ and thus the energy \eqref{eq:energy} is conserved on this time interval which in turn provides a 
uniform bound on the $H^1$ norm of $u$. 

\begin{remark} In the case of constant dispersion and if $p< 1+\frac{4}{d}$, one can infer $T^*=+\infty$, i.e. a global $H^1$ solution, using Strichartz estimates and the conservation of energy. 
Since conservation of energy does not hold in our case one has to argue differently. Namely, if $\varphi \in H^1(\R^d)$ and if $p< 1+\frac{4}{d}$, the global $L^2$ solution constructed in Theorem 
\ref{thm:L2} admits propagation of regularity and thus we indeed obtain that $u\in C(\R, H^1(\R^d))$. The proof of this result is given in \cite[Theorem 5.2.1]{Caz}.
\end{remark}

In the following we denote by $Q\in H^1(\R^d)$ the unique radially symmetric solution of the following elliptic equation:
\begin{equation}\label{eq:Q}
\Delta Q - Q + Q^{ 1 + \frac{4}{d}}=0.
\end{equation}
The existence of such ground states $Q$ has been proved in \cite{BL, BLP}. 

\begin{theorem} \label{thm:blowup}
Let $p =  1 + \frac{4}{d}$ and $\varphi \in H^1(\R^d)$ with \[\| \varphi \|_{L^2} < \gamma^{-d/4}_+  \| Q \|_{L^2}.\] 
Then $T^* = +\infty$. Moreover, this criterion is sharp.
\end{theorem}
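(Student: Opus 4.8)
The plan is to exploit the piecewise constant structure of $\gamma$ together with the \emph{sharp Gagliardo--Nirenberg inequality}. Since $\gamma$ changes sign, the energy \eqref{eq:energy} is no longer a globally conserved quantity, and this is the main obstacle: one cannot directly bound $\|\nabla u\|_{L^2}$ for all times as in the autonomous case. However, on each subinterval $I_n^1=(n,n+t_+]$ and $I_n^2=(n+t_+,n+1]$ the coefficient $\gamma$ is constant, so the corresponding energy \emph{is} conserved there. The idea is therefore to propagate an a priori bound on the kinetic energy from one step to the next, using that any compact time interval meets only finitely many steps; invoking the blow-up alternative of Lemma \ref{lem:H1} will then yield $T^*=+\infty$.

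Recall the sharp Gagliardo--Nirenberg inequality in the mass-critical case $p=1+\frac4d$, namely $\|f\|_{L^{p+1}}^{p+1}\le\frac{p+1}{2}\|Q\|_{L^2}^{-4/d}\,\|\nabla f\|_{L^2}^2\,\|f\|_{L^2}^{4/d}$, with $Q$ the ground state of \eqref{eq:Q}. On a \emph{focusing} step, where $\gamma\equiv\gamma_+$, the conserved energy is $E_+=\frac{\gamma_+}{2}\|\nabla u\|_{L^2}^2-\frac1{p+1}\|u\|_{L^{p+1}}^{p+1}$; combining it with the inequality above and with mass conservation $\|u(t)\|_{L^2}=\|\varphi\|_{L^2}$ gives $E_+\ge\frac12\big(\gamma_+-\|Q\|_{L^2}^{-4/d}\|\varphi\|_{L^2}^{4/d}\big)\|\nabla u\|_{L^2}^2$. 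The smallness hypothesis on $\|\varphi\|_{L^2}$ is precisely what makes the bracket strictly positive, so that $\|\nabla u(t)\|_{L^2}^2$ is bounded throughout the step by $E_+$ evaluated at the beginning of that step, i.e. by the $H^1$ data entering the step. On a \emph{defocusing} step, where $\gamma\equiv-\gamma_-$, the situation is easier: the conserved energy $E_-=-\frac{\gamma_-}{2}\|\nabla u\|_{L^2}^2-\frac1{p+1}\|u\|_{L^{p+1}}^{p+1}$ has both terms of the same sign, whence $\frac{\gamma_-}{2}\|\nabla u(t)\|_{L^2}^2\le-E_-$, again controlling the kinetic energy by the data entering the step.

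Putting the two estimates together, the pair $\big(\|\nabla u\|_{L^2}^2,\|u\|_{L^{p+1}}^{p+1}\big)$ at the end of each step is bounded by a fixed multiple of its value at the beginning of the step (the multiple depending only on $\gamma_\pm$, $\|\varphi\|_{L^2}$ and $\|Q\|_{L^2}$). Since $u\in C([t_0,T],H^1)$ is continuous across the jump times, an induction over the successive steps shows that $\sup_{[t_0,T]}\|\nabla u\|_{L^2}<\infty$ for every finite $T$, because $[t_0,T]$ contains only finitely many steps. Consequently $\|\nabla u(t)\|_{L^2}$ cannot diverge in finite time, and the maximality criterion of Lemma \ref{lem:H1} forces $T^*=+\infty$.

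For sharpness, I would exhibit a threshold-mass datum whose solution blows up in finite time. The rescaled ground state $R(x)=Q(\gamma_+^{-1/2}x)$ solves $\gamma_+\Delta R-R+R^{1+4/d}=0$ and, by a change of variables, satisfies $\|R\|_{L^2}=\gamma_+^{d/4}\|Q\|_{L^2}$, i.e. it sits exactly at the critical mass. Applying the pseudo-conformal transformation to the standing wave $e^{it}R(x)$ produces an explicit solution of the autonomous focusing equation $i\d_tu+\gamma_+\Delta u+|u|^{4/d}u=0$ with constant mass $\gamma_+^{d/4}\|Q\|_{L^2}$ that blows up at a prescribed finite time. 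Using the scaling freedom of this solution, I would arrange the blow-up time $T^*$ to lie inside the first focusing interval $(t_0,t_0+t_+]$; there $\gamma\equiv\gamma_+$, so the dispersion managed equation \eqref{eq:disp_nls} coincides with the autonomous one and the constructed function is genuinely a solution of \eqref{eq:disp_nls}. This shows that the strict inequality in the mass threshold cannot be relaxed, establishing sharpness. The only delicate point here is to ensure that blow-up occurs before the first switch to the defocusing regime, which is guaranteed precisely by tuning the scaling parameter of the pseudo-conformal solution.
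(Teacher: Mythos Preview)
Your approach is essentially the same as the paper's: piecewise conservation of $E_\pm$ on each step combined with the sharp Gagliardo--Nirenberg inequality for global existence, and a pseudo-conformally transformed ground state tuned to blow up inside a focusing interval for sharpness.

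Two points deserve comment. First, your sharpness construction assumes that $(t_0,t_0+t_+]$ is already a focusing interval; the paper treats arbitrary $t_0$ and, when $\gamma(t_0)=-\gamma_-$, takes $\varphi=w(t_0)$ where $w$ solves the \emph{time-reversed defocusing} NLS with data equal to the pseudo-conformal profile at the next switching time $\lceil t_0\rceil$. This backward-flow trick is the only ingredient missing from your outline.

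Second, there is a scaling discrepancy worth flagging. Your bracket $\gamma_+-\|Q\|_{L^2}^{-4/d}\|\varphi\|_{L^2}^{4/d}>0$ is equivalent to $\|\varphi\|_{L^2}<\gamma_+^{d/4}\|Q\|_{L^2}$, and your rescaling $R(x)=Q(\gamma_+^{-1/2}x)$ correctly produces a threshold datum with $\|R\|_{L^2}=\gamma_+^{d/4}\|Q\|_{L^2}$; this is internally consistent and agrees with the paper's own Gagliardo--Nirenberg computation in the proof. The paper, however, states the threshold as $\gamma_+^{-d/4}\|Q\|_{L^2}$ and uses the profile $Q(\sqrt{\gamma_+}\,x)$ (mass $\gamma_+^{-d/4}\|Q\|_{L^2}$) for sharpness. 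These cannot both be right unless $\gamma_+=1$; the remark following the theorem (``the stronger the dispersion in the focusing step, the larger the mass of the initial data can be'') supports your exponent $\gamma_+^{d/4}$, so your version appears to be the consistent one.
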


Up to the inclusion of the scaling factor $\gamma^{-d/4}_+ $, this criterion is analogous to the case without dispersion management, cf. \cite{LP}. 
We see that the stronger the dispersion in the focusing step, the larger the mass of the initial data can be in order to infer global existence.

\begin{proof} With the local in time existence result for $H^1$ solution at hand, we need to show that $\| \nabla u(t, \cdot) \|_{L^2(\R^d)} <+ \infty$ for any finite time $t\in \R$. 
To this end, let us assume for the moment that at $t=t_0$ we start with a defocusing step, i.e. $\gamma(t_0) = - \gamma_-<0$. Denoting by $\lceil t_0 \rceil \in \N$ the ceiling of $t_0\in \R$,
we consequently have that for all $t\in [t_0, \lceil t_0 \rceil]$ the following 
energy is conserved:
\[
E_-(t):= \frac{\gamma_-}{2} \int_{\R^d} | \nabla u(t,x) |^2 dx +  \frac{d}{2d+4} \int_{\R^d}  | u(t,x)|^{2+\frac{4}{d}} dx = E_-(t_0),
\]
where we have set $p = 1+4/d$. Next, let us recall the Gagliardo-Nirenberg type inequality (see \cite{We}):
\begin{equation}\label{eq:GN}
\|f\|_{L^{2+4/n}}^{2+4/d}\le\pare{1+\frac{2}{d}}\|Q\|_{L^{2}}^{-4/d}\,
\|f\|_{L^{2}}^{4/d}\, \|\nabla f\|_{L^2}^2,
\end{equation}
where $Q$ is the solution of \eqref{eq:Q}. Using that $u(t_0)\in H^1(\R^d)$, by assumption, together with the fact that for all times $t\in \R$: $\| u(t) \|_{L^2} = \| \varphi \|_{L^2}<+ \infty$, 
the inequality \eqref{eq:GN} implies that $E_-(t)<+\infty$ for all $t\in [t_0, \lceil t_0 \rceil]$. We consequently obtain that $u(t)\in H^1(\R^d)$ exists up to $t=\lceil t_0 \rceil \in \N$ and that 
\begin{equation}\label{eq:2bounds}
\| \nabla u(\lceil t_0 \rceil, \cdot) \|^2_{L^2} \lesssim E_-(t_0), \quad \|u(\lceil t_0 \rceil, \cdot ) \|^{2+4/d}_{L^{2+4/d}} \lesssim E_-(t_0).
\end{equation}
On the time interval $[\lceil t_0 \rceil , \lceil t_0 \rceil+t_+ ]$ we are in the focusing regime. The associated energy is
\[
E_+(t):= \frac{\gamma_+}{2} \int_{\R^d} | \nabla u(t,x) |^2 dx -  \frac{d}{2d+4} \int_{\R^d}  | u(t,x)|^{2+\frac{4}{d}} dx = E_+(\lceil t_0 \rceil )<+\infty,
\]
where the last inequality follows from \eqref{eq:2bounds}.
Using again \eqref{eq:GN} we infer that for all $t\in [\lceil t_0 \rceil , \lceil t_0 \rceil+t_+ ]$ it holds:
\[
E_+(\lceil t_0 \rceil )= E_+(t)  \ge \frac{1}{2}\pare{\gamma_+-\frac{\| \varphi \|_{L^2}^{4/d}}{\|Q\|_{L^2}^{4/d}}}\|\nabla u(t , \cdot )\|_{L^2}^2.
\]
This consequently implies that if $ \|  \varphi \|_{L^2} < \gamma^{-d/4}_+  \| Q \|_{L^2}$ we have
\begin{equation*}
\|\nabla u(t)\|_{L^2}^2\le 2
\pare{\gamma_+-\frac{\|\varphi \|_{L^2}^{4/d}}{\|Q\|_{L^2}^{4/d}}}^{-1}E_+(\lceil t_0 \rceil )< +\infty, 
\end{equation*}
for all $t\in [\lceil t_0 \rceil , \lceil t_0 \rceil+t_+ ]$. Thus, we infer the existence of $u(t)\in H^1(\R^d)$ up to the time $t= \lceil t_0 \rceil+t_+$ after which we are again in the defocusing regime and the 
same argument as above applies. Clearly, this also shows that we could have started at time $t=t_0$ with a focusing step, instead of a defocusing. In both cases we obtain 
that $\| \nabla u(t) \|_{L^2(\R^d)} <+ \infty$ for any finite time $t\in \R$. 

Next, let us show that this result is indeed sharp, i.e. we need to find an initial data  $ \varphi \in H^1(\R^d)$ such that $\| \varphi \|_{L^2} = \gamma^{-d/4}_+ \| Q \|_{L^2}$ and 
the corresponding maximal time of existence for $u$ is $T^* < + \infty$. By the pseudo-conformal symmetry of the mass-critical NLS with $p =1+\frac{4}{ d}$ it is well-known that (see, e.g., \cite{Caz, LP}):
\begin{equation}\label{eq:va}
v_a(t, x)=(1-at)^{-\frac{d}{2}}Q\pare{\frac{\sqrt{\gamma_+}\, x}{1-at}}e^{ia\frac{|x|^2}{4(1-at)}}e^{i\frac{t}{1-at}}
\end{equation}
is a solution to the mass critical focusing NLS
\[
i\d_t v_a+ \gamma_+ \Delta v_a+|v_a|^{\frac{4}{d}}v_a = 0,
\]
with initial datum
\begin{equation*}
v_a(0,x) =Q(\sqrt{\gamma_+}\, x)e^{ia\frac{|x|^2}{4}},
\end{equation*}
satisfying $\| v_a(0,\cdot) \|_{L^2} = \gamma^{-d/4}_+ \| Q \|_{L^2}$. In addition, we see from \eqref{eq:va} that $v_a$ blows up at time $t=\frac{1}{a}$. 
Now if $t_0$ is such that $\gamma(t_0)= \gamma_+$, i.e. we start with a focusing step, then all we need to do is choose $a=a_*$ such that
\[ \frac{1}{a_*} < t_+ - \{ t_0 \} , \]
where $[0,1)\ni \{ t_0 \} = t_0 - \lfloor t_0 \rfloor$ is the fractional part of $t_0$. Setting $u(t_0,x) = v_{a_*}(0,x)$ ensures that the solution to \eqref{eq:disp_nls} will blow up before the time $\lfloor t_0 \rfloor + t_+$, 
i.e. before switching to the defocusing regime. If on the other hand we start in a defocusing step $\gamma(t_0) = - \gamma_-$, then we choose $u(t_0,x)  = w( t_0, x)$, where
$w$ solves the time-reversed defocusing NLS
\[
i\d_t w + \gamma_- \Delta w - |w|^{\frac{4}{d}}w = 0, 
\]
with initial data $w(\lceil t_0 \rceil, x) = v_{a_*}(0,x)$.
\end{proof}

In particular, the proof shows that a defocusing step in general can not prevent the appearance of finite time blow-up. Note, however, that the 
construction of blow-up solutions given above is based on the pseudo-conformal symmetry of the mass critical NLS and the associated 
blow-up scenario is known to be {\it unstable}, see, e.g., \cite{LP} for further discussions.

\begin{remark} 
Also note that in the proof Theorem \ref{thm:blowup} we have used the fact that $\gamma(t)$ is piecewise constant several times. 
Indeed, the situation where $\gamma(t)$ is smooth and $1-$periodic would be considerably more complicated.
\end{remark}


\section{Averaging for fast dispersion management} \label{sec:fast}

\subsection{Preliminiaries} In this section we consider the $\eps-$scaled NLS \eqref{eq:fdisp_nls}. Clearly, we have that for every {\it fixed} $\eps>0$ 
all the results of the foregoing section remain valid. In the following, though, we shall consider the 
scaling limit as $\eps \to 0_+$, i.e. the regime of rapidly varying dispersion.

As a first step, we shall show that the expected averaging result is true for the linear equation. To this end, we denote by 
$$
U_\eps (t,s) = e^{ i \Gamma_\eps (t, s) \Delta} , \quad \Gamma_\eps (t,s) = \eps \int^{t/\eps}_{s/\eps}\gamma(\tau) d\tau,
$$
the propagator associated to the fast-dispersion map $ \gamma \left(\frac{t}{\eps} \right)$. In addition,  
\[
U_0(t,s) = e^{ i \langle \gamma \rangle  (t-s) \Delta } = e^{ i t \langle \gamma \rangle   \Delta } \circ e^{- i s\langle \gamma \rangle \Delta },
\]
is the propagator associated to linear Schr\"odinger equation with averaged dispersion $\langle \gamma \rangle \in \R$ defined in \eqref{eq:average}. Note that in fact $U_0(t,s) = U_0(t-s,0)$ and thus a group.
\begin{lemma}\label{lem:linhom}
For any $s, t \in\R$ and $f \in H^\sigma(\R^d)$ with $\sigma \in \N$ we have
\[
\lim_{\eps\to0_+} \, \sup_{t,s\in \R}  \|U_\eps (t,s) f-U_0(t,s) f\|_{H^\sigma(\R^d)}=0.
\]
\end{lemma}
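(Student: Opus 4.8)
The plan is to pass to the Fourier side, where both propagators are multiplication operators, and to reduce the whole statement to a single uniform bound on the difference of the two phases. First I would recall from \eqref{eq:U} that
\[
\widehat{U_\eps(t,s)f}(\xi) = e^{-i\Gamma_\eps(t,s)|\xi|^2}\,\widehat f(\xi), \qquad \widehat{U_0(t,s)f}(\xi) = e^{-i\langle\gamma\rangle(t-s)|\xi|^2}\,\widehat f(\xi),
\]
so that, by Plancherel's theorem,
\[
\|U_\eps(t,s)f - U_0(t,s)f\|_{H^\sigma}^2 = \int_{\R^d}(1+|\xi|^2)^\sigma\,\Big|e^{-i\Gamma_\eps(t,s)|\xi|^2} - e^{-i\langle\gamma\rangle(t-s)|\xi|^2}\Big|^2\,|\widehat f(\xi)|^2\,d\xi.
\]
Using the decomposition $\gamma = \langle\gamma\rangle + \gamma_0$ as in \eqref{eq:Gamma_decom} together with a change of variables, the phase discrepancy is
\[
\Gamma_\eps(t,s) - \langle\gamma\rangle(t-s) = \eps\int_{s/\eps}^{t/\eps}\gamma_0(\tau)\,d\tau.
\]

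The crucial step is the uniform-in-$(t,s)$ control of this remainder, and it is here that the averaging mechanism enters. Since $\gamma_0$ is $1$-periodic with mean zero, its primitive $\vartheta(\tau)=\int_0^\tau\gamma_0$ is itself $1$-periodic, hence bounded, and consequently $\big|\int_{s/\eps}^{t/\eps}\gamma_0\big|\le 2\|\vartheta\|_{L^\infty}=:M$ for \emph{all} $t,s$. Therefore
\[
\sup_{t,s\in\R}\big|\Gamma_\eps(t,s) - \langle\gamma\rangle(t-s)\big| \le M\eps.
\]
The point I want to stress is that although $\Gamma_\eps(t,s)$ itself grows linearly in $t-s$, its discrepancy with the averaged phase stays of order $\eps$, \emph{uniformly} over the whole time-axis — this is exactly why the supremum over $t,s$ can be kept under control.

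I would then combine the elementary inequality $|e^{i\alpha}-e^{i\beta}|\le\min(|\alpha-\beta|,2)$ with the bound above to obtain, uniformly in $t,s$,
\[
\Big|e^{-i\Gamma_\eps(t,s)|\xi|^2} - e^{-i\langle\gamma\rangle(t-s)|\xi|^2}\Big| \le \min\big(M\eps|\xi|^2,\,2\big),
\]
and split the frequency integral at $|\xi|=R$. On the high-frequency region $|\xi|>R$ the trivial bound $2$ together with $f\in H^\sigma$ makes the tail $\int_{|\xi|>R}(1+|\xi|^2)^\sigma|\widehat f|^2$ small, uniformly in $t,s$ and $\eps$; on the low-frequency region $|\xi|\le R$ the factor $M\eps|\xi|^2\le M\eps R^2$ yields a contribution bounded by $M^2\eps^2R^4\|f\|_{H^\sigma}^2$, which vanishes as $\eps\to0$ for fixed $R$. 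Taking first $\eps\to0$ and then $R\to\infty$ closes the argument.

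The only genuine input is the uniform oscillatory bound of the second paragraph; once that is secured the rest is completely elementary. The one subtlety to respect is the order of limits: the high-frequency truncation must be chosen \emph{before} sending $\eps\to0$, so that the $R$-dependent constant appearing in the low-frequency term does not interfere with the limit. This is the standard quantifier order in such averaging estimates, and I expect it to be the only place requiring a moment of care. (I note in passing that the argument uses nothing about $\sigma$ being an integer and works verbatim for any real $\sigma\ge0$.)
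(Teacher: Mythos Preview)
Your argument is correct and follows essentially the same route as the paper: pass to the Fourier side via Plancherel, use the decomposition $\Gamma_\eps(t,s)=\langle\gamma\rangle(t-s)+\eps\,\vartheta_\eps(t,s)$ with $\vartheta_\eps$ uniformly bounded (this is exactly the key input the paper isolates), and then conclude. The only cosmetic difference is that the paper invokes dominated convergence directly while you carry out an explicit low/high frequency splitting, which amounts to the same thing; your remark that $\sigma\in\N$ is not actually needed is also accurate.
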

\begin{proof} 
Using \eqref{eq:Gamma_decom} we can decompose $\Gamma_\eps$ as
\[
\Gamma_\eps(t,s) = \langle \gamma \rangle (t-s) + \eps \int_{s/ \eps}^{t / \eps} \gamma_0(\tau) d \tau \equiv \langle \gamma \rangle (t-s) + \eps \vartheta_\eps(t,s),
\]
where we note that $\vartheta_\eps(t,s) \in L^\infty(\R^d)$ uniformly, in view of \eqref{eq:gamma_bound}. 
Fourier transformation and Plancherel's identity allow us to write
\begin{align*}
&\sup_{t,s\in \R}  \|U_\eps (t,s) f-U_0(t,s) f\|^2_{H^\sigma (\R^d)}= \\
& =\sup_{t,s\in \R} \int_{\R^d}(1+|\xi|^2)^\sigma \modu{e^{i\langle \gamma \rangle (t-s) |\xi|^2}
\pare{e^{i\eps \vartheta_\eps(t,s) }-1}}^2 |\widehat f(\xi)|^2\, d\xi.
\end{align*}
The assertion then follows by the Lebesgue dominated convergence theorem and the fact that
\[
\lim_{\eps \to 0_+} \pare{e^{i\eps \vartheta_\eps(t,s) }-1}= 0,
\]
pointwise for all $t,s \in \R$.
\end{proof}

To prove the desired averaging result as $\eps \to 0_+$, we will require sufficiently smooth solutions $u_\eps(t,\cdot) \in H^\sigma(\R^d)$ with $\sigma > \frac{d}{2}$. 
In the following we shall concentrate on the physically relevant cases of $d\le 3$, spatial dimensions. The generalization to higher order dimensions will be indicated below. 
For $d\le 3$ it is sufficient to consider solutions in $H^2(\R^d)$, whose 
existence is guaranteed by the following lemma. 

\begin{lemma}\label{lem:H2}
Let $\varphi \in H^2(\R^d)$ and $2\le p < \infty $. 
Then, for any $\eps >0$, there exists a time $T^*=T^*(\| \varphi \|_{H^2}; \eps)>t_0$ and a unique 
solution $u_\eps \in C([t_0,T],H^2(\R^d))$, for all $t_0<T<T^*$. The solution is maximal in the sense that if $T^*< +\infty$ then 
\[
\lim_{t\to T_*} \| u_\eps(t, \cdot) \|_{H^2(\R^d)} = + \infty.
\]
Furthermore 
\[
\tau:= \inf_{0< \eps \le 1} T^*(\| \varphi \|_{H^2}; \eps) > 0.
\]
\end{lemma}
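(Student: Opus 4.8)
The plan is to prove local well-posedness in $H^2(\R^d)$ by a contraction mapping argument applied to the Duhamel formulation, exactly paralleling the proof of Lemma \ref{lem:H1}, and then to establish the uniform lower bound $\tau>0$ by tracking how the existence time depends on $\eps$ through the Strichartz constants. First I would set up the integral equation for $u_\eps$ together with its first and second spatial derivatives: differentiating \eqref{eq:fdisp_nls} twice in $x$ produces source terms that, since $2\le p<\infty$, are controlled by $|u_\eps|^{p-1}|\nabla^2 u_\eps| + |u_\eps|^{p-2}|\nabla u_\eps|^2$. Because $d\le 3$ and $\sigma=2>d/2$, the algebra property $H^2\hookrightarrow L^\infty$ lets me estimate the nonlinearity in $H^2$ by a power of the $H^2$ norm. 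Working on a single subinterval $I\subset I_n^1$ or $I\subset I_n^2$ (where the Strichartz estimates of Lemma \ref{lem:strichartz} are available for $U_\eps$) and using the admissible pair from the proof of Theorem \ref{thm:L2}, I obtain a closed estimate of the form $\|u_\eps\|_{X(I)}\lesssim \|\varphi\|_{H^2} + |I|^\beta \|u_\eps\|_{X(I)}^p$ in a suitable Strichartz space $X(I)$ built on $H^2$. A standard fixed-point/continuity argument then yields a unique maximal solution and the blow-up alternative $\lim_{t\to T^*}\|u_\eps(t,\cdot)\|_{H^2}=+\infty$ when $T^*<+\infty$.

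The subtlety — and the main obstacle — is the $\eps$-dependence of the existence time, which is not present in Lemma \ref{lem:H1}. The fast dispersion map $\gamma(t/\eps)$ subdivides a fixed time window into $O(1/\eps)$ focusing/defocusing steps, so the Strichartz constants $C_r=C(r,I_n)$ of Lemma \ref{lem:strichartz} must be examined as $\eps\to 0_+$. The rescaled intervals on which $U_\eps$ behaves like the free group have length $\eps t_+$ or $\eps(1-t_+)$, and naively the local existence time produced by the contraction on one such interval could shrink with $\eps$. To control this I would exploit that the Strichartz estimates for $U_\eps$ on an interval $I$ are \emph{uniform in $\eps$}: indeed $U_\eps(t,s)=e^{i\Gamma_\eps(t,s)\Delta}$ is a free Schr\"odinger propagator in the rescaled dispersion variable, and after the change of variables the relevant dispersive constant depends only on $\gamma_\pm$, not on $\eps$. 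Concretely, the homogeneous and inhomogeneous Strichartz bounds on a full period-block can be assembled from the single-step bounds with $\eps$-independent constants, because the total cumulative dispersion $\Gamma_\eps$ over a macroscopic time interval is bounded below in modulus away from the (at most finitely many) zeros governed by \eqref{eq:gamma_bound}.

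The key step is therefore to show that the local existence time produced by the contraction can be taken uniform in $\eps\in(0,1]$. I would argue that the continuity/bootstrap constant $\beta$ and the implicit constant in $\|u_\eps\|_{X(I)}\lesssim \|\varphi\|_{H^2}+|I|^\beta\|u_\eps\|_{X(I)}^p$ are both independent of $\eps$, so the smallness condition on $|I|$ that closes the estimate can be met by a single $\eps$-independent choice of interval length. This gives a uniform lower bound on the first existence time; iterating and using that $\|u_\eps\|_{H^2}$ remains finite on each step then yields $T^*(\|\varphi\|_{H^2};\eps)\ge \tau_0$ for some $\tau_0>0$ depending only on $\|\varphi\|_{H^2}$, $\gamma_\pm$, $p$ and $d$. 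Taking the infimum over $0<\eps\le 1$ gives $\tau=\inf_{0<\eps\le 1}T^*>0$, as claimed. The only point requiring care is to verify that the Strichartz constant genuinely does not degenerate as the number of sign changes of $\gamma(t/\eps)$ grows; I expect this to follow from summing the single-interval estimates with a telescoping argument using unitarity of $U_\eps$ on $L^2$ and the $\eps$-independence of the per-step dispersive decay.
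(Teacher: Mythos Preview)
Your approach differs from the paper's and introduces a difficulty that you do not resolve. The paper's proof is essentially one line: since $H^2(\R^d)\hookrightarrow L^\infty(\R^d)$ for $d\le 3$ and $p\ge 2$, the map $u\mapsto |u|^{p-1}u$ is locally Lipschitz from $H^2$ to $H^2$, so the abstract semilinear theory (Pazy, Chapter~6, Theorem~1.4; see also Cazenave, Section~4.8) yields a local solution by contraction directly in $C([t_0,t_0+\delta];H^2)$. No Strichartz estimates are used. The only property of $U_\eps$ that enters is $\sup_{0<\eps\le1}\|U_\eps(t,s)\|_{H^2\to H^2}=1$, so the existence time $\delta=\delta(\|\varphi\|_{H^2})$ is automatically independent of $\eps$, and $\tau>0$ is immediate.

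By contrast, you set up a Strichartz-based contraction and then must confront the fact that the estimates of Lemma~\ref{lem:strichartz} for $U_\eps$ are available only on a single focusing or defocusing step, whose length is $O(\eps)$. Your claim that the implicit constant in $\|u_\eps\|_{X(I)}\lesssim \|\varphi\|_{H^2}+|I|^\beta\|u_\eps\|_{X(I)}^p$ is $\eps$-independent is correct only when $I$ lies inside one such step, but then $|I|\le \eps\min(t_+,1-t_+)$ cannot be chosen $\eps$-independently. To reach a macroscopic time you would need either (i)~Strichartz estimates for $U_\eps$ on an interval spanning many sign changes of $\gamma(\cdot/\eps)$ with constants uniform in $\eps$---which is not established in the paper and which your heuristic about ``summing single-interval estimates with a telescoping argument'' does not justify, since $\Gamma_\eps(t,s)$ vanishes for $O(1/\eps)$ pairs $(t,s)$ in any fixed window---or (ii)~an iteration over $O(1/\eps)$ steps together with control of the $H^2$ growth across all of them, which you do not supply. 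The key point you overlook is that once you invoke the algebra property of $H^2$, Strichartz is simply unnecessary: the contraction in $C(I;H^2)$ closes using only unitarity of $U_\eps$, and the $\eps$-dependence never appears.
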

\begin{proof}
The proof follows from arguments analogous to those given in the proof of 
Theorem 1.4 in Chapter 6 of \cite{Pa} (see also \cite[Section 4.8]{Caz}). In particular, this establishes the fact that $\tau >0$, bearing in mind that $\sup_{0< \eps \le 1} \| U_\eps(t,s) \| = 1$.
\end{proof}

Standard arguments also imply that for $\varphi \in H^2(\R^d)$ there exists a maximal time of existence $T_*= T_*( \| \varphi\|_{H^2})> t_0$ 
and a unique solution $u_0\in C([t_0,T], H^2 (\R^d))$, for all $t_0<T< T_*$, satisfying the averaged equation \eqref{eq:av_nls}, cf. \cite{Caz, LP} for more details. 
Depending on the power of the nonlinearity $p$ and on the sign of $\langle \gamma \rangle \in \R$ the maximal time of existence for the averaged equation 
might be infinite or not. In particular, for $H^1$-subcritical nonlinearities in the defocusing case $\langle \gamma \rangle < 0$ we have $T_* = +\infty$, cf. the results given in Section 5.3 of \cite{Caz}.

\subsection{Averaging of NLS with rapidly varying dispersion}

The main result of this section is as follows:

\begin{theorem} \label{thm:average} Let $d\le 3$, $\varphi \in H^2(\R^d)$, and $2\le p <\infty $. Denote by $u_0\in C([t_0,T_*), H^2 (\R^d))$ the 
maximal solution to the averaged equation \eqref{eq:av_nls}. In addition, let $u_\eps \in C([t_0,T^*), H^2 (\R^d))$ be the maximal solution of \eqref{eq:fdisp_nls} for given $\eps>0$.
Then, we have that
\[
 u_\eps \stackrel {\eps \to 0_+}\longrightarrow u_0 , \quad \text{in $L^\infty([t_0,T], H^2 (\R^d))$,}
\]
for all $t_0<T < T_*$. 
\end{theorem}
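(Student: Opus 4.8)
The plan is to compare the two evolutions through their Duhamel representations and close a Gronwall estimate in $H^2(\R^d)$, the genuine difficulty being to obtain an a priori bound on $u_\eps$ that is \emph{uniform} in $\eps$ and valid all the way up to $T<T_*$, rather than only on the small common existence interval $[t_0,\tau]$ of Lemma \ref{lem:H2}. Writing $f(u)=|u|^{p-1}u$ and recalling that both $U_\eps(t,s)=e^{i\Gamma_\eps(t,s)\Delta}$ and $U_0(t,s)$ are Fourier multipliers of modulus one, hence unitary on every $H^\sigma(\R^d)$, the difference $w_\eps:=u_\eps-u_0$ obeys
\begin{multline*}
w_\eps(t) = \big(U_\eps(t,t_0)-U_0(t,t_0)\big)\varphi \\
+ i\int_{t_0}^t U_\eps(t,s)\big(f(u_\eps(s))-f(u_0(s))\big)\,ds \\
+ i\int_{t_0}^t \big(U_\eps(t,s)-U_0(t,s)\big)f(u_0(s))\,ds.
\end{multline*}
I would estimate the three contributions separately. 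The first term is controlled directly by Lemma \ref{lem:linhom} with $\sigma=2$, giving $\sup_t\|(U_\eps(t,t_0)-U_0(t,t_0))\varphi\|_{H^2}\to0$ as $\eps\to0_+$.

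The third term is the genuine averaging term and I regard its treatment as the conceptual crux. Since $d\le 3$, the space $H^2(\R^d)$ is a Banach algebra continuously embedded in $L^\infty(\R^d)$, so that for $p\ge2$ the map $f$ sends $H^2$ into $H^2$; because $u_0\in C([t_0,T],H^2)$, the set $K:=\{f(u_0(s)):s\in[t_0,T]\}$ is a compact subset of $H^2$. Lemma \ref{lem:linhom} provides, for each \emph{fixed} $g\in H^2$, the convergence $\sup_{t,s\in\R}\|(U_\eps(t,s)-U_0(t,s))g\|_{H^2}\to0$. The operators $U_\eps(t,s)-U_0(t,s)$ are uniformly bounded (operator norm $\le2$) on $H^2$, so this pointwise convergence upgrades by a standard equicontinuity argument to uniform convergence over the compact set $K$. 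Consequently the third term is bounded in $L^\infty([t_0,T],H^2)$ by $(T-t_0)\,\sup_{s\in[t_0,T]}\sup_{t}\|(U_\eps(t,s)-U_0(t,s))f(u_0(s))\|_{H^2}=o(1)$. Together with the first term, set $\delta_\eps$ equal to the $L^\infty_t H^2$-norm of these two contributions, so that $\delta_\eps\to0$.

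For the second term, unitarity of $U_\eps$ on $H^2$ yields the bound $\int_{t_0}^t\|f(u_\eps(s))-f(u_0(s))\|_{H^2}\,ds$, and the local Lipschitz estimate for $f$ on $H^2$ (the same nonlinear estimates that underlie Lemma \ref{lem:H2}) gives $\|f(u_\eps(s))-f(u_0(s))\|_{H^2}\le C(R)\,\|w_\eps(s)\|_{H^2}$ on any time range where $\|u_\eps(s)\|_{H^2},\|u_0(s)\|_{H^2}\le R$. I would then run a bootstrap: with $M:=\sup_{s\in[t_0,T]}\|u_0(s)\|_{H^2}$ and $R:=M+1$, define $T_\eps$ as the largest $T'\le\min(T,T^*(\eps))$ such that $\|u_\eps(s)\|_{H^2}\le R$ on $[t_0,T']$. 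On $[t_0,T_\eps]$ the three estimates and Gronwall's lemma give $\|w_\eps\|_{L^\infty([t_0,T_\eps],H^2)}\le\delta_\eps\,e^{C(R)(T-t_0)}=:\eta_\eps$, and since $\delta_\eps\to0$ while $C(R)$ is $\eps$-independent, $\eta_\eps\to0$.

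The main obstacle is exactly this promotion of a local bound to $[t_0,T]$, and it is resolved as follows. For $\eps$ small enough that $\eta_\eps<1$ we obtain $\|u_\eps(s)\|_{H^2}\le M+\eta_\eps<R$ \emph{strictly} on $[t_0,T_\eps]$. By the blow-up alternative of Lemma \ref{lem:H2}, this strict bound rules out $T^*(\eps)\le T$ and prevents $T_\eps$ from being an interior saturation point, forcing $T_\eps=T$ (and $T^*(\eps)>T$). Hence $\|w_\eps\|_{L^\infty([t_0,T],H^2)}\le\eta_\eps\to0$, which is the asserted convergence. The generalization to $d>3$ alluded to in the text would proceed identically after replacing $H^2$ by $H^\sigma$ with $\sigma>d/2$ an integer, so that the algebra property, the embedding into $L^\infty$, and Lemma \ref{lem:linhom} all remain available.
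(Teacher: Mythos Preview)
Your proof is correct and follows the same overall strategy as the paper: decompose $u_\eps-u_0$ via Duhamel into the same three pieces, handle the linear and ``source'' terms through Lemma~\ref{lem:linhom}, control the nonlinear difference by a local Lipschitz estimate in $H^2$, and close with a Gronwall-type bound combined with a continuity argument to reach any $T<T_*$.

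The implementation differs in a few minor but notable respects. For the third term you invoke compactness of $\{f(u_0(s)):s\in[t_0,T]\}\subset H^2$ together with the uniform operator bound $\|U_\eps-U_0\|\le 2$ to upgrade the pointwise convergence of Lemma~\ref{lem:linhom} to uniformity in $s$; the paper simply asserts this step. For the extension to $[t_0,T]$, you run a single continuous bootstrap (define the maximal time on which $\|u_\eps\|_{H^2}\le R$ and show the strict improvement forces it to equal $T$), whereas the paper iterates in steps of length $\tau$, the uniform local existence time from Lemma~\ref{lem:H2}. You also close with the standard integral Gronwall inequality, while the paper invokes an abstract $L^q$--$L^p$ Gronwall lemma from~\cite{CS}. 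These choices make your version slightly more self-contained, but the two arguments are equivalent in strength.
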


In particular, we know that as $\eps\to 0_+$ the solution $u_\eps$ of the original equation \eqref{eq:disp_nls} can not blow up before $T_*>0$, the maximal time of existence of a 
smooth solutions to the averaged equation.

\begin{proof} In the following, we denote the nonlinearity by $f(u) = |u|^{p-1} u $, for simplicity. In view of Duhamel's formula \eqref{eq:duhamel} we need to estimate the difference
\begin{align*}
 & u_\eps(t,x) - u_0(t,x) = (U_\eps (t,t_0) -U_0(t, t_0)) \varphi (x) \\
 & +  i \int_{t_0}^t U_\eps(t,s) f(u_\eps(s,x) )- U_0(t,s) f(u_0(s,x)) \, ds.
\end{align*}
For the first term on the right hand side we can directly use Lemma \ref{lem:linhom}, whereas the second term can be rewritten as
\begin{align*}
\int_{t_0}^t U_\eps(t,s) f(u_\eps(s,x) )- U_0(t,s) f(u_0(s,x) \, ds = \mathcal I^\eps_1 + \mathcal I^\eps_2,
\end{align*}
where
\[
\mathcal I^\eps_1  = \int_{t_0}^t U_\eps(t,s) \left( f(u_\eps(s,x)) - f( u_0(s,x)) \right) \, ds,
\]
and 
\[
\mathcal I^\eps_2 = \int_{t_0}^t \left(U_\eps(t,s)- U_0(t,s) \right) f( u_0(s,x)) \, ds.
\]
We first start by deriving an estimate in $L^2(\R^d)$, i.e. $\sigma =0$. Using Minkowski's inequality, 
\[
\| \mathcal I^\eps_2 \|_{L^\infty_t L^2_x} \le |T-t_0| \| \left(U_\eps(t,s)- U_0(t,s) \right) f( u_0(s)) \|_{L^\infty_{t,s} L^2_x}.
\]
As long as $u_0 (t,\cdot) \in H^2(\R^d) \hookrightarrow L^\infty(\R^d)$ for $d\le 3$ we have
\[
\|f( u_0)\|_{L^2} \le  \| u_0 \|^{p-1}_{L^\infty}{ \| u_0\|}_{L^2} < +\infty,
\]
and thus Lemma \ref{lem:linhom} implies that for $\eps = \eps(T)$ sufficiently small there exists a $ \delta(\eps)>0$ such that
\[
\| \mathcal I^\eps_2 \|_{L^\infty_t L^2_x} \lesssim  \delta(\eps) \stackrel {\eps \to 0_+}\longrightarrow 0.
\]
Next, we note that for all $u, v \in \mathbb C$ it holds
\[
| f(u)- f(v) | \le C ( |u|^{p-1} + |v|^{p-1}) | u - v|.
\]
With this we can estimate $\mathcal I^\eps_1$, using again Minkowski's inequality and the fact that $U_\eps$ is unitary on $L^2$, via
\begin{align*}
\| \mathcal I^\eps_2 \|_{L^\infty_t L^2_x} \lesssim \| f(u_\eps )- f( u_0)  \|_{L^1_t L^2_x} \lesssim ( \|u_\eps \|_{L^\infty_{t,x}} ^{p-1} + \|u_0 \|_{L^\infty_{t,x}} ^{p-1}) \| u_\eps - u_0  \|_{L^1_t L^2_x}
\end{align*}
As long as $u_\eps, u_0 \in H^2(\R^d) \hookrightarrow L^\infty(\R^d)$ this implies
\begin{equation} \label{eq:estimate1}
\| u_\eps  - u_0 \|_{L^\infty_t L^2_x} \le \delta(\eps) + C \| u_\eps  - u_0 \|_{L^{1}_t L^2_x}.
\end{equation}
Next, we recall the following result proved in \cite{CS} (see also \cite{CPS, PS}):
\begin{lemma}[{\cite[Lemma A.1]{CS}}]
Let $T>0$ and $1\le p < q  \le \infty$ and $A,B \ge 0$ some constants. Assume that $f\in L^q(0,T)$ satisfies, for all $t\in (0,T)$:
\[
\| f \|_{L^q(0,t)} \le A + B \| f \|_{L^p(0,t)}.
\]
Then there exists a $K = K(B, p,q, T)$ such that $\| f \|_{L^q(0,t)} \le A K$.
\end{lemma}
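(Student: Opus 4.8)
The plan is to prove this as a Grönwall-type integral inequality by localizing in time and iterating; the apparent nonlinearity is harmless, since $f$ enters both sides only through its own norms. Set $\alpha := \frac1p - \frac1q \in (0,1]$, which is strictly positive because $p<q$. The single elementary tool is Hölder's inequality on a finite interval: for $0\le a<b\le T$,
\[
\|f\|_{L^p(a,b)} \le (b-a)^\alpha \|f\|_{L^q(a,b)},
\]
with the convention $\alpha = 1/p$ when $q=\infty$. Substituting this directly into the hypothesis gives $\|f\|_{L^q(0,t)} \le A + Bt^\alpha\|f\|_{L^q(0,t)}$, which can be absorbed into the left-hand side only while $Bt^\alpha<1$; for large $t$ this breaks down, so a single application does not suffice and time must be cut into small pieces.

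Assuming $B>0$ (the case $B=0$ being trivial with $K=1$), I would first fix a step size $h := \min\{T,(2B)^{-1/\alpha}\}$, chosen so that $Bh^\alpha\le\tfrac12$, and partition $[0,T]$ into $N=\lceil T/h\rceil$ subintervals $I_j = ((j-1)h,\,jh]$, the last one truncated at $T$. Writing $\phi(t):=\|f\|_{L^q(0,t)}$, which is nondecreasing, the goal is an inductive bound $\phi(jh)\le C_j A$ with constants $C_j$ depending only on $B$, $\alpha$, and $T$.

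The inductive step carries the real content. For $t\in I_j$ and $s_0=(j-1)h$, I would split the $L^p$-norm (here $p<\infty$ always, so the triangle inequality applies) and apply Hölder on each piece:
\[
\|f\|_{L^p(0,t)} \le \|f\|_{L^p(0,s_0)} + \|f\|_{L^p(s_0,t)} \le s_0^\alpha\,\phi(s_0) + (t-s_0)^\alpha\,\|f\|_{L^q(s_0,t)}.
\]
Using $s_0^\alpha\le T^\alpha$, the inductive hypothesis $\phi(s_0)\le C_{j-1}A$, the bound $(t-s_0)^\alpha\le h^\alpha$, and $\|f\|_{L^q(s_0,t)}\le\phi(t)$ (monotonicity, valid also for $q=\infty$), the assumed inequality becomes
\[
\phi(t) \le A\big(1+BT^\alpha C_{j-1}\big) + Bh^\alpha\,\phi(t).
\]
Since $Bh^\alpha\le\tfrac12$, the last term is absorbed, giving $\phi(t)\le 2A(1+BT^\alpha C_{j-1})$ and hence the recursion $C_j = 2 + 2BT^\alpha C_{j-1}$ with $C_0=0$. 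This linear recursion over finitely many steps solves to a finite constant $C_N$ depending only on $B$, $\alpha=\frac1p-\frac1q$, $T$, and $N$; setting $K:=C_N$ and noting that every $t\in(0,T)$ lies in some $I_j$ with $\phi(t)\le\phi(jh)\le C_jA\le KA$ finishes the argument.

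The hard part, as already flagged, is exactly that a direct Hölder substitution is useless on the full interval because $Bt^\alpha$ need not be small; the remedy is the time-localization to subintervals of controlled length together with the bookkeeping of the accumulating constants $C_j$ through the recursion. The only other point I would treat carefully is the case $q=\infty$, where the $L^q$-norm decomposes as a maximum rather than a sum, but all the monotonicity and localization steps go through unchanged.
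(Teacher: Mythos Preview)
The paper does not prove this lemma; it is simply quoted from \cite[Lemma~A.1]{CS} and used as a black box inside the proof of Theorem~\ref{thm:average}. Your argument is correct and is precisely the standard proof of this Gr\"onwall-type estimate: use H\"older on intervals of length $h$ with $Bh^{\alpha}\le\tfrac12$ to absorb the local contribution, and then iterate the resulting linear recursion across $N=\lceil T/h\rceil$ steps. The only minor comments are cosmetic: the split $\|f\|_{L^p(0,t)}\le\|f\|_{L^p(0,s_0)}+\|f\|_{L^p(s_0,t)}$ is in fact valid for all $1\le p\le\infty$ (so no special pleading is needed for $p<\infty$), and the final sentence could simply note that for $q=\infty$ one has $\|f\|_{L^\infty(s_0,t)}\le\phi(t)$ directly, so nothing changes.
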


Using this Lemma with $q=\infty$ and $p=1$, we infer from \eqref{eq:estimate1} that
\[
\| u_\eps  - u_0\|_{L^\infty_t L^2_x} \lesssim \delta(\eps) \stackrel {\eps \to 0_+}\longrightarrow 0.
\]
To obtain the analogous estimate for  $H^\sigma$ with $\sigma =1,2$ we differentiate the equation w.r.t. $x\in \R^d$, using the fact that the nonlinearity is sufficiently smooth. Then, similar arguments as before imply that 
\begin{equation}\label{eq:estimate2}
\| u_\eps  - u_0\|_{L^\infty_t H^2_x}   \stackrel {\eps \to 0_+}\longrightarrow 0.
\end{equation}

The result then follows from a continuity argument similar to the one given in \cite{CS}: Fix $0< T < T_*$ and set $M= \sup_{0\le t \le T} \| u_0 \|_{H^2}$. 
It follows from Lemma \ref{lem:H2} that for $\|\varphi\|_{H^2} \le M$ there exists a $\tau>t_0$ such that for any $\eps >0$, $u_\eps$ exists on $[t_0, \tau]$ and
\begin{equation}\label{eq:estimate3}
\sup_{0<\eps \le 1} \| u_\eps \|_{L^\infty ((t_0,\tau), H^2)} \lesssim \| \varphi \|_{H^2}.
\end{equation}
Next, let $t_0< \ell \le T$ be such that $u_\eps$ exists on $[t_0,\ell]$ for $\eps$ sufficiently small and 
\[
\limsup_{\eps \to 0_+} \| u_\eps \|_{L^\infty((t_0, \ell), H^2)} < +\infty.
\]
Note that $\tau = \ell$ is always a possible choice. Then we deduce from \eqref{eq:estimate2} above that 
\[
\| u_\eps (\ell, \cdot) - u_0 (\ell, \cdot) \|_{H^2}  \stackrel {\eps \to 0_+}\longrightarrow 0,
\]
from which we consequently infer that 
\[
\| u_\eps(\ell, \cdot) \| _{H^2} \le M,
\]
for $\eps \ll 1$ sufficiently small. Applying Lemma \ref{lem:H2} to the NLS \eqref{eq:fdisp_nls} translated by $\ell$ we deduce that for $\eps$ sufficiently small, $u_\eps$ exists on $[t_0, \tau +\ell]$ 
and that 
\[
\sup_{0<\eps \le 1} \| u_\eps \|_{L^\infty ((t_0,\tau +\ell), H^2)} \lesssim \| \varphi \|_{H^2}.
\]
In other words, the estimate \eqref{eq:estimate3} holds with $\ell$ replaced by $\ell + \tau$, provided that $\ell + \tau \le T<T_*$. Iterating this argument we infer that 
\[
\sup_{0<\eps \le 1} \| u_\eps \|_{L^\infty ((t_0,T), H^2)} \lesssim \| \varphi \|_{H^2},
\]
and the assertion is proved.
\end{proof}

As a first corollary we can state the following result valid in arbitrary spatial dimensions $d\in \N$, provided the nonlinearity is sufficiently smooth.

\begin{corollary} 
Let $d \in \N$, $\varphi  \in H^\sigma(\R^d)$ with $\N \ni \sigma > \frac{d}{2}$ and assume that the nonlinearity satisfies $p = 1 + 2 \alpha$ with $\alpha \in \N$. Then 
\[
 u_\eps \stackrel {\eps \to 0_+}\longrightarrow u_0 , \quad \text{in $L^\infty([t_0,T], H^\sigma (\R^d))$,}
\]
for all $t_0<T < T_*$, the maximal time of existence of solutions $u_0 (t, \cdot) \in H^\sigma(\R^d)$, satisfying averaged equation.
\end{corollary}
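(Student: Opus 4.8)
The plan is to repeat the proof of Theorem \ref{thm:average} essentially verbatim, with $H^2(\R^d)$ replaced by $H^\sigma(\R^d)$ throughout, and to check that each ingredient survives under the two standing hypotheses. Their role is precisely the following: the condition $\sigma > \frac d2$ yields the Sobolev embedding $H^\sigma(\R^d) \hookrightarrow L^\infty(\R^d)$ and, more importantly, makes $H^\sigma(\R^d)$ a Banach algebra under pointwise multiplication; the condition $p = 1+2\alpha$ with $\alpha \in \N$ makes the nonlinearity $f(u) = |u|^{p-1}u = (u\conj u)^\alpha u$ a \emph{polynomial} in $u$ and $\conj u$, hence smooth, so that $f$ maps $H^\sigma$ into itself with a tame estimate and is locally Lipschitz on bounded subsets of $H^\sigma$.

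First I would establish the local well-posedness analogue of Lemma \ref{lem:H2} in $H^\sigma(\R^d)$: for $\varphi \in H^\sigma$ there is a maximal time $T^* = T^*(\| \varphi \|_{H^\sigma};\eps) > t_0$ and a unique solution $u_\eps \in C([t_0,T],H^\sigma)$ for all $t_0 < T < T^*$, together with the uniform lower bound $\tau := \inf_{0<\eps\le 1} T^* > 0$. The argument is the same fixed-point scheme as in Lemma \ref{lem:H2}: since $U_\eps(t,s)$ is unitary on every $H^\sigma$ (Lemma \ref{lem:group}) and the Duhamel map involves only the algebra structure together with the tame estimate $\| f(u) \|_{H^\sigma} \lesssim \| u \|_{H^\sigma}^p$, a contraction is obtained on a time interval whose length depends only on $\| \varphi \|_{H^\sigma}$ and not on $\eps$, giving $\tau > 0$. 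Correspondingly, the maximal $H^\sigma$ solution $u_0$ of the averaged equation \eqref{eq:av_nls} on $[t_0,T_*)$ exists by the same standard reasoning.

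Next I would run the Duhamel comparison exactly as in the proof of Theorem \ref{thm:average}. Writing $u_\eps - u_0 = (U_\eps(t,t_0)-U_0(t,t_0))\varphi + \mathcal I^\eps_1 + \mathcal I^\eps_2$ with the identical splitting, the linear term tends to zero in $H^\sigma$ by Lemma \ref{lem:linhom}, which is already stated for arbitrary $\sigma \in \N$. For $\mathcal I^\eps_2$ I would use Minkowski's inequality and apply Lemma \ref{lem:linhom} to the fixed function $f(u_0(s))$, which lies in $H^\sigma$ uniformly in $s$ by the tame estimate, so that $\| \mathcal I^\eps_2 \|_{L^\infty_t H^\sigma_x} \lesssim \delta(\eps) \to 0$. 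For $\mathcal I^\eps_1$ I would use unitarity of $U_\eps$ on $H^\sigma$, Minkowski, and the Lipschitz bound $\| f(u_\eps)-f(u_0) \|_{H^\sigma} \lesssim ( \| u_\eps \|_{H^\sigma}^{p-1}+ \| u_0 \|_{H^\sigma}^{p-1}) \| u_\eps - u_0 \|_{H^\sigma}$, valid because $f$ is polynomial and $H^\sigma$ is an algebra. This produces the integral inequality
\[
\| u_\eps - u_0 \|_{L^\infty_t H^\sigma_x} \le \delta(\eps) + C\, \| u_\eps - u_0 \|_{L^1_t H^\sigma_x},
\]
to which \cite[Lemma A.1]{CS} applies with $q=\infty$, $p=1$, yielding $\| u_\eps - u_0 \|_{L^\infty_t H^\sigma_x} \lesssim \delta(\eps) \to 0$ on any interval where both solutions stay bounded in $H^\sigma$.

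Finally I would close with the same continuity/bootstrap as in Theorem \ref{thm:average}: fix $0 < T < T_*$, set $M = \sup_{[t_0,T]} \| u_0 \|_{H^\sigma}$, use the uniform existence time $\tau$ from the generalized Lemma \ref{lem:H2} to place $u_\eps$ on $[t_0,\tau]$ with $\sup_{0<\eps\le1} \| u_\eps \|_{L^\infty((t_0,\tau),H^\sigma)} \lesssim \| \varphi \|_{H^\sigma}$, and then iterate: at each step the convergence just proved forces $\| u_\eps(\ell,\cdot) \|_{H^\sigma} \le M$ for $\eps$ small, and re-applying the uniform local theory advances the interval by the fixed amount $\tau$ until $[t_0,T]$ is covered. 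The only place where more than bookkeeping is required is the pair of $H^\sigma$ nonlinear estimates (the tame bound and the Lipschitz bound), and this is exactly where the hypotheses $\sigma > \frac d2$ and $p = 1+2\alpha$, $\alpha\in\N$, enter: the algebra property together with the polynomial (hence smooth) structure of $f$ make both estimates routine, whereas for non-integer $p$ the limited regularity of $|u|^{p-1}u$ near its zeros would obstruct differentiating $\sigma$ times in $x$.
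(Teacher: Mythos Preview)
Your proposal is correct and follows essentially the same approach as the paper: the paper's own proof simply notes that local $H^\sigma$ existence (with $\sigma>\tfrac d2$ and $p=1+2\alpha$) follows as in \cite[Section 4.10]{Caz}, and that the averaging argument then carries over verbatim from Theorem~\ref{thm:average}. You have spelled out in detail exactly how that carry-over works, correctly identifying the algebra property of $H^\sigma$ and the polynomial structure of $f$ as the places where the two hypotheses enter.
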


\begin{proof} In higher dimensions, existence of smooth solutions $u_0, u_\eps(t,\cdot) \in H^\sigma(\R^d)$ with $\sigma > \frac{d}{2}$ 
can be proved as in \cite[Section 4.10]{Caz}, provided $p = 1 + 2 \alpha$ with $\alpha \in \N$. The averaging result then follows by the same arguments 
as given in the proof of Theorem \ref{thm:average} above.
\end{proof}

More interestingly, in the case of zero average dispersion $\langle \gamma \rangle = 0$ the averaged equation \eqref{eq:fdisp_nls} simplifies to an ordinary differential equation
\begin{equation}\label{eq:ode}
i\d_t u_0 + |u_0|^{p-1}u_0 = 0, \qquad u_0(t_0,x)=\varphi(x),
\end{equation}
which has been formally derived in \cite{BK2} (including higher order corrections in $\eps$). Equation \eqref{eq:ode} can be solved explicitly, resulting in the following corollary (stated for $d\le 3$, for simplicity).

\begin{corollary}\label{cor:zero} Let $d\le 3$ and 
$\langle \gamma \rangle = 0$. Under the same assumptions as in Theorem \ref{thm:average} we have, for any compact time-interval $I\subset \R$, that
\[
 u_\eps  \stackrel {\eps \to 0_+} \longrightarrow  u_0, \quad \text{in $L^\infty(I; H^2(\R^d))$,}
\]
where
\[
u_0(t,x) = \varphi(x) e^{i (t-t_0) |\varphi(x)|^{p-1}}.
\]
\end{corollary}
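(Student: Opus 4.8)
The plan is to establish the two claims of the corollary in turn: that the given $u_0$ is the (unique) solution of the degenerate averaged equation \eqref{eq:ode}, and that the convergence $u_\eps \to u_0$ extends to an arbitrary compact interval. Since $\langle \gamma \rangle = 0$, the averaged equation loses its Laplacian and \eqref{eq:ode} becomes, for each fixed $x \in \R^d$, an ordinary differential equation in $t$. Testing \eqref{eq:ode} against $\overline{u_0}$ and taking the imaginary part shows that $\d_t |u_0(t,x)|^2 = 0$, so that $|u_0(t,x)| = |\varphi(x)|$ for all $t$. With this in hand I would simply insert the ansatz $u_0(t,x) = \varphi(x)\, e^{i(t-t_0)|\varphi(x)|^{p-1}}$ and check that $i\d_t u_0 = -|\varphi|^{p-1} u_0 = -|u_0|^{p-1} u_0$ together with $u_0(t_0,\cdot) = \varphi$; uniqueness is immediate from the pointwise Cauchy--Lipschitz theorem.

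The second step is to show that this solution is global in $H^2$, i.e. that its maximal existence time satisfies $T_* = +\infty$ in both time directions. Here I would work directly from the explicit formula: differentiating the phase $e^{i(t-t_0)|\varphi|^{p-1}}$ once and twice in $x$ produces terms carrying the factors $(t-t_0)$ and $(t-t_0)^2$ multiplied by one or two spatial derivatives of $|\varphi|^{p-1}$. Using the embeddings $H^2(\R^d) \hookrightarrow L^\infty(\R^d)$ and $H^1(\R^d) \hookrightarrow L^4(\R^d)$, both valid for $d \le 3$, together with $p \ge 2$ and the pointwise conservation $|u_0(t,x)| = |\varphi(x)|$, each such term is controlled in $L^2(\R^d)$, yielding a bound of the form $\|u_0(t,\cdot)\|_{H^2} \le P(|t-t_0|)\,\|\varphi\|_{H^2}$ with $P$ a polynomial. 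This is finite on every compact interval, so the blow-up alternative from the $H^2$ theory never triggers and $T_* = +\infty$.

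Granting $T_* = +\infty$, Theorem \ref{thm:average} immediately gives $u_\eps \to u_0$ in $L^\infty([t_0,T], H^2(\R^d))$ for every $T > t_0$. To cover the part of a compact interval $I$ lying to the left of $t_0$ I would use the time-reversibility of the equation: if $u_\eps$ solves \eqref{eq:fdisp_nls}, then $v_\eps(t) := \overline{u_\eps(2t_0 - t)}$ solves an equation of the same form in which $\gamma(\cdot/\eps)$ is replaced by the reflected map $\tau \mapsto \gamma(2t_0/\eps - \tau)$. The latter is again piecewise constant, $1$-periodic and of zero average, so Theorem \ref{thm:average} applies verbatim to $v_\eps$ and furnishes convergence for $t \le t_0$. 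Splicing the forward and backward estimates yields $u_\eps \to u_0$ in $L^\infty(I; H^2(\R^d))$ for every compact $I \subset \R$.

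The only genuinely delicate point I anticipate lies in the global $H^2$ bound of the second step, and more precisely in the $H^2$-regularity of the composition $x \mapsto |\varphi(x)|^{p-1}$ at the borderline exponent $p=2$ near the zero set of $\varphi$. There the distributional second derivative of $|\varphi|$ may carry a singular contribution, but it appears only in the term $\varphi\,\d^2(|\varphi|^{p-1})$, whose prefactor $\varphi$ vanishes precisely on that set; making rigorous that this term stays in $L^2(\R^d)$ is the heart of the matter, everything else being routine.
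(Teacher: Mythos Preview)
Your approach is essentially the paper's, only more carefully executed. The paper likewise multiplies \eqref{eq:ode} by $\overline{u_0}$ to obtain $|u_0(t,x)|=|\varphi(x)|$ (it writes ``real part'' where you correctly write ``imaginary part''), then passes to polar form and integrates the phase equation. What the paper does \emph{not} do is justify $T_*=+\infty$ or address times $t<t_0$; it simply invokes Theorem~\ref{thm:average} and asserts the conclusion. Your explicit $H^2$ bound on the closed-form $u_0$ and your time-reversal argument for the backward direction fill precisely these gaps, and the borderline issue you flag at $p=2$---where $\partial^2|\varphi|$ may be singular on $\{\varphi=0\}$ but always appears multiplied by a vanishing prefactor---is a genuine technical point that the paper passes over in silence.
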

\begin{proof}
In the case $\langle \gamma \rangle = 0$, Theorem \ref{thm:average} implies that $ u_\eps$ converges, for $\eps \to 0_+$ to the solution of \eqref{eq:ode}. 
Multiplying the latter by $\overline u_0$ and taking the real part, we find
\[
\partial_t |u_0|^2 = 0 ,  \qquad u_0(t_0,x)=\varphi(x),
\]
and thus $| u_0 (t,x) | = |\varphi (x) |$ for all $t\in \R$. Writing $u_0 = |u_0| e^{i \theta}$ and, analogously, $\varphi = |\varphi| e^{i\theta_0}$, we find the following equation for the phase 
\[
\partial_t \theta  = |\varphi(x)|^{p-1} \theta, \quad \theta(t_0,x) = \theta_0(x).
\]
Integration w.r.t. $t$ then yields the result.
\end{proof}

The particular form of $u_0$ found in the case of zero average dispersion corresponds to a solution which {\it does not disperse}. Indeed, the spatial density (corresponding to the energy density of 
an electromagnetic pulse) is seen to be 
time {\it independent}, i.e. $|u_0(t,x)|^2 = | \varphi(x)|^2$, for all $t\in \R$. However, the solution $u_0$ oscillates with an $x$-dependent frequency $\omega(x) = |\varphi(x)|^{p-1}$. From the physics point of view, Corollary \ref{cor:zero} 
provides a possible justification for the stabilizing effect of dispersion management in optical fibers with mean zero dispersion. Note, however, that this effect should be distinguished from the ones established in \cite{ZGJT}.

\begin{remark} The case for rapidly varying mean zero dispersion $\langle \gamma \rangle = 0$ is radically different from the corresponding situation found for NLS with 
time-periodic nonlinearity management \cite{CS, DG}. In the latter case, the effective model obtained after averaging is given by a {\it linear free} Schr\"odinger equation, whose 
solution is purely dispersive, in contrast to dispersion managed NLS.
\end{remark}

\section{Concluding remarks}\label{sec:remarks}

\subsection{The case $|\gamma(t) | >0$}

In this case, the behavior of \eqref{eq:disp_nls} is purely focusing or defocusing. In particular, we have that, for any given $t_0\in \R$, the 
mapping 
\[t\mapsto \Gamma(t,t_0)\in \R\] is strictly monotone 
and we can define a new unknown
\[
v(t,x) = u(\Gamma(t,t_0), x), 
\]
which solves 
\begin{equation}\label{eq:NM_nls}
i\d_tv +\Delta v + \kappa(t) |v|^{p-1}v=0,\qquad v(0,x)=\varphi(x),
\end{equation}
with coefficient $\kappa (t) = \frac{1}{\gamma(t)}$. Equation \eqref{eq:NM_nls} is a NLS with time-dependent {\it nonlinearity management}, similar to the models studied in \cite{CS, Fa}. 
In particular, if $\gamma(t)<0$ for all $t\in \R$, this equation is defocusing and one can prove global in time existence of solutions in $H^1(\R^d)$ along the lines of \cite{Fa}.

Of course, the (physically and mathematically) most interesting case of nonlinearity management is the one with sign changing time-dependent coefficient $\kappa(t)$. Such a situation, however, 
is no longer equivalent to the one with dispersion management.

\subsection{Possible generalizations} 

Let us mention that it is straightforward to generalize all of our results to the following NLS type equation with linear dissipation
\[
i\d_tu+\gamma(t)\Delta u+|u|^{p-1}u + i \sigma u=0, \quad \sigma >0.
\]
This equation models wave propagation in dispersion managed fibers including the effects of absorption (or damping) by the fiber. In the case without dispersion management the influence 
of the damping term on the possibility of finite time blow-up is well studied, cf. \cite{Fi, OT}.

Another possibility would be to consider models with only partial dispersion management, i.e. where the dispersion management only appears in one space direction. An example of this sort 
can be found in \cite{ABS}, where the authors study the following equation (in $d=2$):
\[
i\d_tu+\big(\gamma(t)\partial_{xx} + \partial_{yy}\big) u +|u|^{2}u =0.
\]
Note that with our choice of $\gamma(t)$ this equation periodically switches in between the usual, ``elliptic" NLS and a {{\it non-elliptic}, or {\it hyperbolic}, NLS
(in the terminology of \cite{GS} and \cite{SuSu}, respectively). To this end, we recall that the Cauchy problem for the non-elliptic NLS
\[
i\d_tu- \partial_{xx} u+ \partial_{yy} u +|u|^{2}u =0
\]
is locally well-posed for initial data in $H^1(\R^2)$, see \cite{GS}. It is conjectured that this local solution is actually global.
One of the main differences between elliptic and non-elliptic NLS is the absence of 
localized solitary waves for the latter.

\bigskip
\bigskip

{\bf Acknowledgment.} The third author wants to thank I. Nenciu for helpful discussions.

\end{document}